\title{Perfect coalition in graphs}
\author{{\small Doost Ali Mojdeh$^{a}$\thanks{Corresponding author}\ , Mohammad Reza Samadzadeh$^{b}$}\\
{\small $^{a,b}$Department of
		Mathematics, Faculty of Mathematical Sciences}\\{\small University of Mazandaran, Babolsar, Iran}\\
	{\small $^a$damojdeh@umz.ac.ir} \\{\small $^b$m.samadzadeh02@umail.umz.ac.ir}}
\date{}
\newtheorem{theorem}{Theorem}[section]
\newtheorem{corollary}[theorem]{Corollary}
\newtheorem{lemma}[theorem]{Lemma}
\newtheorem{observation}[theorem]{Observation}
\newtheorem{p}{Problem}
\theoremstyle{definition}
\newtheorem{definition}[theorem]{Definition}
\theoremstyle{remark}
\newenvironment{unnumbered}[1]{\trivlist
	\item [\hskip \labelsep {\bf #1}]\ignorespaces\it}{\endtrivlist}
\begin{document}
	
	\maketitle
	\begin{abstract}
		
		\noindent A perfect dominating set in a graph $G=(V,E)$ is a subset $S \subseteq V$ such that each vertex in $V \setminus S$ has exactly one neighbor in $S$. A perfect  coalition in  $G$ consists of two disjoint sets of vertices $V_1$ and $V_2$ 
such that i) neither $V_1$ nor $V_2$ is a dominating set, ii) each vertex in $V(G) \setminus V_1$ has at most one 
neighbor in $V_1$ and each vertex in $V(G) \setminus V_2$ has at most one neighbor in $V_2$, and iii) $V_1 \cup V_2$ is 
a perfect dominating set. A perfect coalition partition (abbreviated $prc$-partition) in a graph $G$ is a vertex partition $\pi= \lbrace V_1,V_2,\dots ,V_k \rbrace$ such that for each set $V_i$ of $\pi$, either $V_i$ is a singleton dominating 
set or there exists a set  $V_j \in \pi$  that forms a perfect coalition with $V_i$. In this paper, we  
initiate the  study of  perfect coalition partitions in graphs.  We obtain a bound on the number of perfect coalitions involving each member of a  perfect coalition partition, 
in terms of maximum degree. The perfect coalition of some special graphs are investigated. Graphs   with minimum degree one, triangle-free graphs  and  trees with  large perfect coalition numbers are investigated.         
\end{abstract}

	{\bf Keywords:} Perfect coalition,  perfect coalition partition, perfect coalition number.\vspace{1mm}\\
	{\bf MSC 2020:} 05C69.
	\section{Introduction}
		Let $G=(V,E)$ denote a simple, finite and undirected   graph of order $n$ with vertex set $V=V(G)$ and edge set $E=E(G)$. The {\em open
		neighborhood} of a vertex $v\in V$ is the set $N(v)=\lbrace u : \ uv \in E \rbrace$, and its {\em closed
		neighborhood} is the set $N[v]=N(v) \cup \lbrace v \rbrace$. Each vertex of $N(v)$ is called a {\em neighbor}
	of $v$, and  the cardinality of $N(v)$ is called the {\em degree} of $v$, denoted by deg$(v)$ or deg$_G (v)$. A
	vertex $v$ of degree $1$ is called a {\em pendant vertex} or {\em leaf}, and its neighbor is called a {\em support vertex}. The {\em minimum} and the {\em maximum
		degree} of $G$ is denoted by $\delta (G)$ and $\Delta (G)$, respectively.  For a set $S$ of vertices of $G$, the subgraph induced by $S$ is denoted by  $G[S]$.
	For two  sets $X$ and $Y$ of vertices, let  $[X,Y]$ denote the set of edges between $X$ and $Y$.	
	If every vertex of $X$ is adjacent to every vertex of  $Y$, we say that $[X,Y]$ is {\em full}, while
	if there are no edges between them, we say that $[X,Y]$ is {\em empty}.
	A subset $V_i \subseteq V$ is called a {\em singleton set}, {\em doubleton set} and {\em tripleton set}, if $\lvert V_i \rvert =1$, $\lvert V_i \rvert =2$ and $\lvert V_i \rvert =3$, respectively. We denote the path, cycle and complete graph of order $n$, by $P_n$, $C_n$ and $K_n$, respectively. The girth of a graph $G$, denoted by $g(G)$, is the length of its shortest cycle. A graph is called triangle-free if it has no $K_3$ as a subgraph. 
	
	A set $S\subseteq V$  in a graph $G=(V,E)$ is called a {\em dominating set} if  for any vertex $v\in V$,  either $v \in S$ or $v$ has a neighbor in  $S$. The minimum cardinality of a dominating set of $G$ is called {\em domination number} of $G$, denoted by $\gamma (G)$.  A set $S \subseteq V$ is a  {\em perfect dominating set} of a graph $G = (V ,E)$ if every vertex in $V \setminus S$ has exactly one neighbor in $S$. We denote the minimum cardinality of of a perfect dominating set of $G$, by $\gamma_{p} (G)$. The concept of domination and its variants have been widely studied in the literature, see for example \cite{ref10,ref4,ref11}.

In 2020,  Haynes et al. \cite{ref7} introduced the concept of coalition domination in graphs.
 A {\em coalition} in a graph $G=(V,E)$ consists of two disjoint sets $V_1$ and $V_2$ of vertices such that neither $V_1$ nor $V_2$ is a dominating set, but the union $V_1\cup V_2$
is a dominating set of $G$. A {\em coalition partition} (abbreviated $c$-partition) in a graph $G$  is
a vertex partition $\pi = \lbrace V_1, V_2, \dots , V_k\rbrace$ such that every set $V_i$ either is a singleton dominating set, or is not a dominating set but
forms a coalition with another set $V_j$ in $G$. The maximum cardinality of a coalition partition in $G$
is called the {\em coalition number} of  $G$,  denoted by $C(G)$. A $c$-partition of $G$ with order $C(G)$ is called a $C(G)$-partition. For more studies in this area, we refer the reader to \cite{ref3,ref6,ref9,ref5,ref8}. Since the introduction of this concept, some of its variants have been introduced and studied, see for example \cite{ref1,ref2,ref12,ref13}. In this paper, we introduce the concepts of perfect coalition and perfect coalition partition in graphs. We define these terms as follows.

		\begin{definition}\label{prc-def}
		A {\em perfect coalition} in a graph $G$ consists of two disjoint sets of vertices $V_1$ and $V_2$ such that 
		\begin{enumerate}
	\item[(i)] 
	Neither $V_1$ nor $V_2$ is a dominating set of $G$.
	\item[(ii)]
	Each vertex in $V(G) \setminus V_1$ has at most one neighbor in $V_1$, and each vertex in $V(G) \setminus V_2$ has at most one neighbor in $V_2$.
	\item[(iii)] 
	$V_1 \cup V_2$ is a perfect dominating set of $G$.
	\end{enumerate}
	\end{definition}
	\begin{definition}\label{prcp-def}
A {\em perfect coalition partition} (abbreviated $prc$-partition) in a graph $G$ is a vertex partition $\pi= \lbrace V_1,V_2,\dots ,V_k \rbrace$ such that for each set $V_i$ of $\pi$ either $V_i$ is a singleton dominating set, or there exists a set  $V_j \in \pi$  that forms a perfect coalition with $V_i$. The maximum cardinality of a $prc$-partition in $G$ is called the {\em perfect coalition number} of $G$, denoted by $PRC(G)$. We say that $PRC(G)=0$ if $G$ has no $prc$-partition. A $prc$-partition of $G$ with order $PRC(G)$ is called a $PRC(G)$-partition. 
	\end{definition}

As will be seen below, Section 2 investigates the number of $prc$-partners of a set in a  $prc$-partition of a graph  in terms of maximum degree. In Section 3, we determine  perfect coalition number of paths and cycles. In Section 4, we characterize the graphs $G$ with $\delta(G)=1$ and $PRC(G)=\lvert V(G)\rvert$ in Subsection 4.1, while in Subsection 4.2, we characterize triangle-free graphs $G$ with perfect coalition number $\lvert V(G)\rvert$. The trees $T$ of order $n$ are characterized whenever $PRC(T)\in \{n,n-1,n-2\}$ in Subsection 4.3.
Finally, we close the paper with some research problems in Section 5.
\section{Bounds}
Obviously, every $prc$-partition of a graph $G$ is a $c$-partition of $G$. Thus, we have the following.
\begin{observation} \label{bound-C}
For any graph $G$, $PRC(G) \leq C(G)$.
\end{observation}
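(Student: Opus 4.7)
The plan is to show directly that every $prc$-partition of $G$ is automatically a $c$-partition, so the maximum cardinality over the (smaller) family of $prc$-partitions is bounded by the maximum cardinality over the family of $c$-partitions. First I would compare Definition \ref{prc-def} with the definition of a coalition recalled in the introduction. Condition (i) in both notions is identical, namely that neither $V_1$ nor $V_2$ is itself a dominating set. Condition (iii) for a perfect coalition, that $V_1\cup V_2$ is a perfect dominating set, implies in particular that $V_1\cup V_2$ is a dominating set, because every vertex outside the union has exactly one neighbor in it, hence at least one. The additional multiplicity bound in condition (ii) is a strengthening that is not required for coalition-hood and does not obstruct it. Consequently, every pair $(V_1,V_2)$ that forms a perfect coalition also forms an ordinary coalition.

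Next I would promote this from pairs to partitions. Let $\pi=\{V_1,\dots,V_k\}$ be an arbitrary $prc$-partition of $G$. For each block $V_i$ one of the two alternatives in Definition \ref{prcp-def} holds: either $V_i$ is a singleton dominating set, which is exactly the first alternative in the definition of a $c$-partition; or there is some $V_j\in\pi$ that forms a perfect coalition with $V_i$, which by the previous paragraph means that the same pair $\{V_i,V_j\}$ is a (not necessarily perfect) coalition. Hence $\pi$ is already a $c$-partition, so $|\pi|\le C(G)$. Taking $\pi$ to be a $PRC(G)$-partition, so that $|\pi|=PRC(G)$, yields $PRC(G)\le C(G)$. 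There is no genuine obstacle; the only point worth flagging is that condition (ii) is a strict strengthening, and one should expect the inequality to be strict in many examples, a phenomenon the later sections exhibit concretely.
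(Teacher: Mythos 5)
Your argument is correct and is exactly the paper's reasoning: the paper simply remarks that every $prc$-partition is a $c$-partition (since a perfect dominating set is in particular a dominating set), and you have spelled out that one-line observation in full detail, including the harmless degenerate case $PRC(G)=0$.
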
 
Haynes et al. \cite{ref7} proved the following result.
\begin{theorem}
	\cite{ref7} Let $G$ be a graph with maximum degree $\Delta (G)$, and let $\pi$ be a $C(G)$-partition. If $X \in \pi$, then $X$ is in at most $\Delta (G)+1$ coalitions.
\end{theorem}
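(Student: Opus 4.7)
The plan is to exhibit a single ``witness'' vertex $u$ whose closed neighborhood $N[u]$ meets every coalition partner of $X$; since the parts of $\pi$ are pairwise disjoint, this will immediately bound the number of partners by $|N[u]| \leq \Delta(G)+1$.

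To find the witness, I would first note that if $X$ participates in at least one coalition then, by the definition of coalition, $X$ cannot itself be a dominating set of $G$. Hence $V(G)\setminus N[X]$ is nonempty and I can pick any $u$ in it; this $u$ is neither in $X$ nor adjacent to any vertex of $X$. Now let $Y_1,Y_2,\ldots,Y_k$ be the coalition partners of $X$ in $\pi$. For each $i$, the union $X\cup Y_i$ dominates $G$, so $u$ must be dominated by $X\cup Y_i$; since $u\notin N[X]$, this forces $Y_i \cap N[u] \neq \emptyset$. Because the $Y_i$ are distinct parts of a partition they are pairwise disjoint, so the $k$ nonempty sets $Y_i \cap N[u]$ form a disjoint family of subsets of $N[u]$, and therefore $k \leq |N[u]| = \deg(u)+1 \leq \Delta(G)+1$.

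I do not expect any substantive obstacle here: once the witness has been chosen, the proof is essentially a pigeonhole count on $N[u]$. The only minor point to verify is the corner case in which $X$ is a singleton dominating set; then $X$ participates in no coalitions at all, so the bound holds trivially, and this case is ruled out the moment we assume $X$ has at least one partner. A secondary subtlety worth flagging is that ``$Y_i$ dominates $u$'' is meant inclusively (allowing $u \in Y_i$), but this is consistent with $Y_i \cap N[u] \neq \emptyset$ and so the counting is unaffected.
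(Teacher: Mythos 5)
Your proof is correct and uses exactly the standard witness-vertex argument: the paper does not reprove this cited result, but the identical reasoning (a vertex $x$ undominated by $X$ forces every partner to meet $N[x]$, and disjointness of the parts gives the bound $|N[x]|\leq\Delta(G)+1$) appears verbatim as the opening step of the paper's proof of Theorem~\ref{bound-delta}. No gaps.
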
  
The following theorem shows that for a graph $G$, the above upper bound can be reduced to $\Delta (G)$, when dealing with perfect coalition partitions. 
\begin{theorem} \label{bound-delta}
	Let $G$ be a connected graph with maximum degree $\Delta (G)$, and let $\pi$ be a $prc$-partition of $G$. If $A \in \pi$, then $A$ is in at most $\Delta (G)$ perfect coalitions. Further, this bound is sharp.
\end{theorem}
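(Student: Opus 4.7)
The plan is to adapt the standard counting argument for coalitions (which only yields the weaker bound $\Delta(G)+1$) and then to shave off the extra $+1$ by a structural argument exploiting the rigidity of perfect dominating sets together with the connectedness of $G$. First I would dispose of the trivial case: if $A$ is itself a dominating set, then condition~(i) of Definition~\ref{prc-def} forbids $A$ from lying in any perfect coalition, so it has $0$ partners. Otherwise, pick $v\in V(G)\setminus A$ with $N(v)\cap A=\emptyset$. For every perfect coalition partner $Y$ of $A$, the set $A\cup Y$ is perfect dominating, which together with $N(v)\cap A=\emptyset$ forces $v$ either to lie in $Y$ or to have exactly one neighbor in $Y$. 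Since the partners are disjoint parts of $\pi$, distinct partners claim distinct vertices of $N[v]$, and at most one partner can contain $v$; if none does, we already get $k\le \deg(v)\le \Delta(G)$. So the only delicate case is that some partner, call it $Y_1$, contains $v$, where the naive count only gives $k\le \Delta(G)+1$.

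The hard part will be ruling out $k=\Delta(G)+1$ in this delicate case. Assuming equality, $\deg(v)=\Delta(G)$ and the remaining partners contain one neighbor of $v$ each, say $u_i\in Y_i$ for $i\ge 2$. Applying the perfect-domination condition of $A\cup Y_1$ at each $u_i$ forces $v$ to be its unique neighbor in $A\cup Y_1$, so $u_i$ has no neighbor in $A$; applying it to $A\cup Y_j$ for every $j\ne 1,i$ then pins a unique neighbor of $u_i$ in $Y_j$. Hence $\deg(u_i)=k-1=\Delta(G)$, with exactly one neighbor in each $Y_j$ with $j\ne i$ and none anywhere else. I would then call a vertex \emph{special} if it lies in some $Y_i$, has no neighbor in $A$, and has exactly one neighbor in each $Y_j$ with $j\ne i$; both $v$ and each $u_i$ are special. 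The key iterative step is to apply the same perfect-domination bookkeeping at any neighbor of a special vertex, concluding that every such neighbor is again special. Thus the set $S$ of special vertices is closed under taking neighbors in $G$.

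This propagation is the crux: it works precisely because the tight assumption $k=\Delta(G)+1$ (rather than the weaker inequality) saturates the degree bound for every special vertex, leaving no room for edges escaping $S$. Once $S$ is closed under neighborhoods, $v\in S$ so $S\ne\emptyset$, and since special vertices live in partners of $A$ the non-empty part $A$ is disjoint from $S$; hence $S$ and $V(G)\setminus S$ are both non-empty with no edges between them, contradicting the connectedness of $G$. This completes the proof of $k\le\Delta(G)$. For the sharpness claim I would exhibit $G=C_4$ with the partition into four singletons: each singleton $\{x\}$ forms a perfect coalition with the singleton at each of its two neighbors (the union is an edge of $C_4$, which is a perfect dominating set), but not with the antipodal singleton (whose union fails to be perfect dominating). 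Every part of this partition is then in exactly $\Delta(C_4)=2$ perfect coalitions, showing the bound is sharp.
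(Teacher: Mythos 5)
Your proof of the upper bound is correct and is essentially the paper's argument: after disposing of the case where no partner contains the non-dominated vertex $v$ (which already gives $k\le\Delta(G)$), you assume $k=\Delta(G)+1$ and use the perfect-domination condition to show that the degree of every ``special'' vertex is saturated with one neighbor in each other partner, so the set of special vertices is nonempty, disjoint from $A$, and closed under neighborhoods, contradicting connectedness --- this is exactly the paper's set $Y$ and its argument that $[Y,V(G)\setminus Y]=\emptyset$, with your case split arguably making the step $N[x]\subseteq U$ cleaner than the paper's. The one shortfall is sharpness: your example $C_4$ only witnesses equality for $\Delta=2$, whereas the paper constructs, for every $\Delta\ge 2$, a graph $G_\Delta$ (a clique $u_1,\dots,u_\Delta$ plus a pendant path $w$--$v$--$u_1$) with a $prc$-partition in which $\{w\}$ has exactly $\Delta$ partners; to fully justify ``this bound is sharp'' you should supply such a family for general $\Delta$.
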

\begin{proof}
	If $A$ is  a singleton dominating set, then it has no $prc$-partner. Hence, we may assume that $A$ does not dominate $G$. Let $x$ be a vertex that is not dominated by $A$. Now $x$ must be dominated by every $prc$-partner of $A$, implying that every $prc$-partner of $A$ must contain a vertex in $N[x]$. Since $\lvert N[x]\rvert \leq \Delta (G)+1$, it follows that  $A$ cannot have more that $\Delta (G)+1$ $prc$-partners. Now we show that $A$ has at most $\Delta (G)$ $prc$-partners. Suppose, to the contrary, that $A$ admits $\Delta (G)+1$ $prc$-partners. Let $V_1,V_2,\dots ,V_{\Delta (G)+1}$ be $prc$-partners of $A$. Define $G^\prime =G[U]$, where $U=\bigcup_{i=1} ^{\Delta (G)+1} V_i$.  Let $X$ be the set of vertices in $G^\prime$  having a neighbor in $A$, and $Y$ be the set of vertices $v_i$ in $G^\prime$  having a neighbor in $U \setminus V_{v_i}$, where $V_{v_i}$ is the set in $\pi$ containing $v_i$. 
	
	First we show that $X \cap Y= \emptyset$. Suppose, to the contrary, that $X \cap Y \neq \emptyset$. Let $s \in X \cap Y$, where $V_s$ is the set in $\pi$ containing $s$. Further,  let $t \in U \setminus V_s$ be a neighbor of $s$ in $G$, and let $V_t$ be the set in $\pi$ containing $t$. Now we observe that $s$ has at least two neighbors in $A \cup V_t$, implying that the sets $A$ and $V_t$ are not $prc$-partners, a contradiction. Hence, $X \cap Y =\emptyset$. 

Next we show that $X \cup Y =U$. Suppose, to the contrary, that $X \cup Y \neq U$. Let $e \in U \setminus (X \cup Y)$. Let $V_e$ be the set in $\pi$ containing $e$. Now for each $V_i \in \lbrace V_1,V_2,\dots ,V_{\Delta (G)+1} \rbrace \setminus \lbrace V_e\rbrace$, $e$ is not dominated by $A \cup V_i$, implying that $A$ and $V_i$ are not $prc$-partners, a contradiction. Hence, $X \cup Y =U$.
	
	As before, we let $x$ denote a vertex in $G$ that is not dominated by $A$. Note that $N[x] \subseteq  U \setminus X =Y$, and so $Y \neq \emptyset$. Let $f$ be an arbitrary vertex in $Y$, where $V_f$ is the set in $\pi$ containing $f$. Now for each $V_i \in \lbrace V_1,V_2, \dots ,V_{\Delta (G)+1}\rbrace \setminus \lbrace V_f\rbrace$, since $V_i \cup A$ is a dominating set of $G$ and $A$ does not dominate $f$, it follows that $f$ has a neighbor in $V_i$. Therefore, deg$_{G^\prime} (f)\geq \Delta (G)$, and so deg$ (f) =$ deg$_{G^\prime} (f) = \Delta (G)$. Now, choosing $f$ arbitrarily, we deduce that $[Y, V(G) \setminus Y]=\emptyset$ which contradicts the fact that $G$ is connected. Hence, $A$ admits at most $\Delta (G)$ $prc$-partners.
	
	To prove the sharpness, for each $\Delta \geq 2$, we construct a graph  $G_\Delta$ with $\Delta (G_\Delta)=\Delta$, and a $prc$-partition $\pi$  such that there exists a set in $\pi$ having $\Delta$ $prc$-partners. Let $G_\Delta$ be a graph with $V(G_\Delta)= \lbrace w,v, u_1,u_2,\dots ,u_\Delta \rbrace$. and $E(G_\Delta) =\lbrace u_i u_j : 1\leq i <j \leq \Delta \rbrace \cup \lbrace wv, vu_1 \rbrace$. Consider the vertex partition $\pi =\lbrace \lbrace w\rbrace ,\lbrace u_1,v\rbrace ,\lbrace u_2\rbrace ,\lbrace u_3\rbrace ,\dots ,\lbrace u_\Delta \rbrace \rbrace$. One can observe that $\pi$ is a $prc$-partition of $G_\Delta$, where the set $\lbrace w\rbrace$ forms a perfect coalition with all other sets of $\pi$. This completes the proof.
	
\end{proof}
Note that the bound presented in Theorem \ref{bound-delta} does not hold for disconnected graphs. For example, consider the graph $G=K_m \cup K_2$, $m\geq 2$, with $V(G)= \lbrace v_1,v_2,\dots ,v_m \rbrace \cup \lbrace u_1,u_2\rbrace $ , and $E(G)= \lbrace v_i v_j \  | \ 1\leq i\leq j \leq m \rbrace \cup \lbrace u_1 u_2\rbrace$.  One can observe that the singleton partition $\pi_1$ of $G$ is a $prc$-partition for $G$, where the set $\lbrace u_1\rbrace $ has $m$ $prc$-partners, while $\Delta (G)=m-1$. Hence we obtain the following result.
\begin{observation} \label{bound-dis}
		Let $G$ be a disconnected graph with maximum degree $\Delta (G)$, and let $\pi$ be a $prc$-partition of $G$. If $A \in \pi$, then $A$ is in at most $\Delta (G)+1$ perfect coalitions. Further, this bound is sharp.
\end{observation}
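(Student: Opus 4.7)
My plan is to obtain the upper bound by recycling the opening of the proof of Theorem \ref{bound-delta}, which does not use connectivity, and then to appeal to the example already constructed in the remark preceding the statement for sharpness.

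For the bound, I would first dispose of the trivial case: if $A$ is a singleton dominating set, then $A$ has no $prc$-partner and the claim holds vacuously. Otherwise, I would fix any vertex $x \in V(G)$ that is not dominated by $A$. For every $prc$-partner $V_i$ of $A$, the union $A \cup V_i$ must dominate $x$, and since $A$ does not, $V_i$ must contain at least one vertex of $N[x]$. Because the $prc$-partners of $A$, being distinct blocks of the partition $\pi$, are pairwise disjoint, their intersections with $N[x]$ are pairwise disjoint nonempty subsets of $N[x]$. It follows that the number of such partners is at most $|N[x]| \leq \Delta(G)+1$.

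The reason this cannot be improved to $\Delta(G)$ in general is precisely that the tighter argument of Theorem \ref{bound-delta} used connectivity in its final step, namely to derive a contradiction from $[Y,V(G)\setminus Y] = \emptyset$; in the disconnected setting that step no longer yields a contradiction. For sharpness, the graph $G = K_m \cup K_2$ described in the remark preceding the statement already suffices: one checks that in the singleton $prc$-partition of $G$ the block $\{u_1\}$ forms a perfect coalition with each $\{v_i\}$ for $i=1,\dots,m$, giving exactly $m = \Delta(G)+1$ perfect coalition partners.

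I do not foresee a substantial obstacle: the whole content is the observation that the first half of the proof of Theorem \ref{bound-delta} never invoked connectivity, combined with a pointer to the sharpness example already on the page. The only mild care needed is in phrasing the disjointness step correctly, so that partners are counted by the pairwise disjoint vertices they supply in $N[x]$ rather than by $N[x]$ itself.
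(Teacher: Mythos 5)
Your proposal is correct and matches the paper's (implicit) argument exactly: the upper bound is the first half of the proof of Theorem \ref{bound-delta}, which never uses connectivity, and sharpness is witnessed by the $K_m \cup K_2$ example given in the remark immediately preceding the observation. Your added remark that distinct partners meet $N[x]$ in pairwise disjoint nonempty sets is a worthwhile clarification of the counting step, but it is not a different approach.
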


\section{Paths and cycles}
In this section, we will determine the perfect coalition number of  paths and cycles. 

\begin{lemma} \cite{ref6} \label{lempath}
	For any path $P_n$, $C(P_n)\leq 6$.
\end{lemma}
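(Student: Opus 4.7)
The plan is to bound the number of classes in any coalition partition of $P_n$ by exploiting the severe degree restriction $\Delta(P_n)=2$. For small $n$ (specifically $n\le 6$) the bound $C(P_n)\le n\le 6$ is trivial, so I would assume $n\ge 7$ and take a $c$-partition $\pi=\{V_1,\dots,V_k\}$, aiming to show $k\le 6$. Since $\Delta(P_n)=2$, no singleton is a dominating set once $n\ge 3$, hence every class of $\pi$ must have at least one $c$-partner.

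The key step is to study how vertices near the two endpoints are distributed among the classes. Let $A$ be the class containing the leaf $v_1$. Any $c$-partner $B$ of $A$ must satisfy that $A\cup B$ dominates every vertex of $P_n$; in particular, the vertex $v_n$ forces $B$ to contribute a vertex in $\{v_{n-1},v_n\}$ whenever $A$ does not reach that far, and similar constraints cascade from other undominated vertices. Combined with the general upper bound $\Delta(P_n)+1=3$ on the number of $c$-partners of any single class (from the result of Haynes et al.\ cited just above), this severely restricts which classes can meet the prefix $\{v_1,v_2,v_3\}$ and which can meet the suffix $\{v_{n-2},v_{n-1},v_n\}$, because every class must obtain a partner that intrudes into the small neighborhood of each vertex it fails to dominate.

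With these restrictions in hand, I would partition the classes of $\pi$ into those meeting the left prefix, those meeting the right suffix, and the "interior" classes, and argue that an interior class $V_i$ needs a partner that also reaches into the interior; the $\Delta=2$ constraint then forces pairs of interior classes to be located in a very narrow window of the path. A counting argument that charges each class to the vertex or short vertex-segment that its partners must cover yields $k\le 6$.

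The main obstacle will be the case analysis of the endpoint configurations: depending on whether $\{v_1,v_2,v_3\}$ sits in one, two, or three distinct classes (and symmetrically for the suffix), different classes play the role of partner-supplier, and in each subcase one has to produce a class that has no available partner whenever $k\ge 7$. Organizing this bookkeeping cleanly, so that every configuration eliminates at least one candidate partner and thereby collapses $k$ to at most $6$, is where I expect the bulk of the technical work to lie.
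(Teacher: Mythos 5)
The lemma you are asked to prove is imported by the paper from \cite{ref6} without proof, so there is no in-paper argument to compare against; your sketch has to stand on its own, and as written it is a plan rather than a proof. The two ingredients you actually put on the table --- that for $n\ge 4$ no singleton class dominates $P_n$ (note your ``$n\ge 3$'' is off: $\{v_2\}$ dominates $P_3$), and that each class has at most $\Delta(P_n)+1=3$ coalition partners --- yield only $k\le 8$, not $k\le 6$: at most two classes meet $N[v_1]=\{v_1,v_2\}$, every other class must be a coalition partner of one of these two in order to dominate $v_1$, and each of the two serves at most three partners, so $k\le 2+3+3=8$. Everything that would close the gap from $8$ to $6$ --- the ``counting argument that charges each class to the vertex or short vertex-segment that its partners must cover'' and the ``case analysis of the endpoint configurations'' --- is exactly what you defer as future technical work. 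But that is where the entire content of the lemma lives: one must show, for instance, that the classes containing $v_1$ and $v_2$ cannot each realize three partners disjoint from one another's, or run the symmetric constraint at the other leaf $v_n$ simultaneously and control how the two families of forced partners overlap. None of this is carried out, no configuration with $k\ge 7$ is actually refuted, and the charging scheme is never defined.

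The overall strategy (leaf analysis at both endpoints combined with the $\Delta+1$ bound on the number of partners of a class) is the right one and is essentially the route of \cite{ref6}, so this is a gap of execution rather than of approach. Still, in its current form the proposal proves only $C(P_n)\le 8$; to be acceptable it needs the explicit enumeration of how $\{v_1,v_2\}$ and $\{v_{n-1},v_n\}$ distribute over classes, and, in each case, the identification of a class that cannot find a partner when $k\ge 7$.
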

Using Lemma \ref{lempath} and Observation \ref{bound-C}, we have the following.
\begin{corollary}\label{l-path}
	For any path $P_n$, $PRC(P_n)\leq 6$.
\end{corollary}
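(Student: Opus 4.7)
The plan is to observe that this corollary is an immediate consequence of the two results just cited, so no new graph-theoretic argument is required; the proof is purely a transitivity-of-inequalities step.

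First, I would invoke Observation \ref{bound-C}, which applies to every graph and in particular to the path $P_n$, giving $PRC(P_n) \leq C(P_n)$. Then I would appeal to Lemma \ref{lempath}, which asserts $C(P_n) \leq 6$ for every path. Chaining these two yields $PRC(P_n) \leq C(P_n) \leq 6$, which is exactly the claimed bound.

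Since the argument is a one-line composition of two already established facts, there is no substantive obstacle to overcome. The only thing worth pointing out in the write-up is why Observation \ref{bound-C} is applicable without restriction: it holds for any graph, because any $prc$-partition satisfies all the requirements of a $c$-partition (every perfect coalition is a coalition, since condition (iii) of Definition \ref{prc-def} strengthens ordinary domination). Thus the maximum size of a $prc$-partition cannot exceed the maximum size of a $c$-partition, and the corollary follows.
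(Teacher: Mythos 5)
Your proof is correct and matches the paper's argument exactly: the corollary is obtained by chaining Observation \ref{bound-C} with Lemma \ref{lempath}, giving $PRC(P_n)\leq C(P_n)\leq 6$. Your added justification that every $prc$-partition is a $c$-partition is precisely the reasoning behind Observation \ref{bound-C} as stated in the paper.
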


\begin{theorem} \label{prc-path}  
	For the path $P_n$, 
	$$
	PRC(P_n)= \begin{cases}
		1 & if \  n=1, \\
		2 & if \  n=2, \\
		0 & if \  n=3, \\
		4 & if \  n=4,6,8, \\
		3 & if \  n=5, \\
		5 & if \  n=7,9,10,11,13, \\
		6 & otherwise.
	\end{cases}
	$$ 
\end{theorem}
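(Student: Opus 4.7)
My approach combines the universal upper bound with case analysis by the value of $n$. Since every $prc$-partition is also a $c$-partition, Corollary~\ref{l-path} already gives $PRC(P_n) \le 6$ for every $n$. The remainder of the proof consists of matching this bound (or a smaller one) with an explicit partition for each $n$, together with tightness arguments for the exceptional small values.

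The trivial and degenerate cases I would argue directly. For $n \in \{1,2\}$ every vertex dominates $P_n$, so the singleton partition achieves the claimed value. For $n=3$ the only singleton that dominates is $\{v_2\}$, which is itself a dominating set and so cannot be a partner in any perfect coalition; any other proposed partition either contains a non-singleton dominating set or pairs $v_1$ with $v_3$, in which case $v_2$ has two neighbors in the union and perfect domination fails. Hence no $prc$-partition exists and $PRC(P_3)=0$.

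For $n \in \{4,5,6,7,8,9,10,11,13\}$ I would proceed one $n$ at a time: first exhibit an explicit $prc$-partition of the stated size, then rule out any strictly larger partition by a short structural argument. The impossibility argument typically hinges on one distinguished vertex---for example $v_3$ in $P_5$, where no singleton $\{v_3\}$ admits a valid perfect coalition partner because every candidate union either leaves $v_1$ or $v_5$ undominated or gives $v_2$ or $v_4$ two neighbors in one of the pieces---so that certain forced mergers cap the number of classes. For $n = 12$ and $n \ge 14$ I would exhibit a concrete six-part partition built from a short prefix, a periodic interior of period three, and a matching suffix, with the period aligned so that every class has a designated partner whose union with it is a perfect dominating set. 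Condition (ii) of Definition~\ref{prc-def} is verified by checking that no two members of the same class lie at distance exactly $2$ in $P_n$, and condition (iii) by checking the dominating pattern of each prescribed pair; the restriction to $n = 12$ or $n \ge 14$ reflects exactly the residues of $n$ for which one can cleanly join the periodic interior to both boundaries.

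The principal obstacle will be the impossibility argument for $n = 13$: I must show that every partition of $P_{13}$ into six parts contains some class without any perfect coalition partner, even though both neighboring lengths $n=12$ and $n=14$ do admit six-part partitions. This forces the argument to identify the precise arithmetic obstruction---most likely the fact that the only periodic interior patterns compatible with the boundary conditions leave a length-$13$ path one vertex short of closing up---rather than rely on a crude counting bound. I expect this delicate case, together with the corresponding tightness checks for the other exceptional values, to constitute the bulk of the technical work; once the right periodic templates are fixed, the constructions and the universal upper bound are routine.
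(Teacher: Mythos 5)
Your proposal is a roadmap rather than a proof: it correctly identifies the overall architecture (the upper bound $PRC(P_n)\le C(P_n)\le 6$, explicit partitions for each value, impossibility arguments for the exceptional $n$), but the parts that carry all the difficulty are left as declared intentions. You yourself flag the $n=13$ impossibility as "the bulk of the technical work" still to be done, and the same is true of $n=8$ (where both $6$ and $5$ must be excluded), $n=9$, $n=10$ and $n=11$. A "short structural argument hinging on one distinguished vertex" does not suffice for these cases; the paper's exclusions rest on two ingredients your proposal never supplies. First, Theorem~\ref{bound-delta}: in a connected graph each set of a $prc$-partition has at most $\Delta(G)$ partners, hence at most $2$ on a path. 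This is what forces the shape of any hypothetical $6$- or $5$-partition (e.g.\ for $P_8$ a $6$-partition would need two doubletons each partnering exactly two singletons) and reduces the problem to a finite question about perfect dominating sets. Second, an explicit enumeration of the perfect dominating sets of the relevant orders (six of order $4$ and two of order $3$ in $P_8$; five of order $5$ and twenty of order $6$ in $P_{13}$) together with the observation that every pair of them intersects, which contradicts the disjointness that a $prc$-partition would require. Without the degree bound and the enumeration, your "arithmetic obstruction" for $n=13$ remains a conjecture about what the obstruction might be, not an argument.

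The constructive half is also underspecified in a way that matters. Your proposed "periodic interior of period three" does not obviously produce valid $prc$-partitions: the paper's six-part partitions for $n=12$ and even $n\ge 12$, and for odd $n\ge 15$, use residues modulo $4$ (sets built from $\{v_{4k}\}$, $\{v_{4k+1}\}$, etc., glued to an irregular prefix), precisely because the union $V_1\cup V_2$ must be a perfect dominating set while neither $V_1$ nor $V_2$ dominates and condition (ii) must hold for \emph{every} class against \emph{every} outside vertex, not only within a class. Until you exhibit concrete partitions and verify conditions (i)--(iii) for a designated partner of each class, the claim $PRC(P_n)=6$ for $n=12$ and $n\ge 14$ is unestablished. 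In short: the skeleton matches the paper's, but the missing pieces are exactly the ones that constitute the proof.
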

\begin{proof}
	Consider the path $P_n$ with $V(P_n)=\lbrace v_1,v_2,\dots ,v_n\rbrace$ and $E(P_n)=\{ v_i v_{i+1} : 1\leq i\leq n-1\}$.
	The result is obvious for $n \leq 4$, so we assume $n\geq 5$. For the paths $P_5$ and $P_6$, it is easy to check that $P_5$ has no $prc$-partition of order $5$ or $4$, and that $P_6$ has no $prc$-partition of order $6$ or $5$. Now the partitions $\lbrace \lbrace v_1,v_5\rbrace ,\lbrace v_2\rbrace ,\lbrace v_3,v_4\rbrace \rbrace$ and $\lbrace \lbrace v_2\rbrace ,\lbrace v_5\rbrace ,\lbrace v_3,v_6\rbrace ,\lbrace v_1,v_4\rbrace \rbrace$ are $prc$-partitions for $P_5$ and $P_6$, respectively. Therefore, $PRC(P_5)=3$ and $PRC(P_6)=4$. 
	
	Now let $n=7$. By Theorem \ref{bound-delta}, $PRC(P_7) \neq 6$. On the other hand, the partition $\lbrace \lbrace v_2,v_6\rbrace, \lbrace v_1,v_7\rbrace , \lbrace v_3\rbrace , \lbrace v_4\rbrace ,\lbrace v_5\rbrace \rbrace$ is a $prc$-partition for $P_7$, and so $PRC(P_7)=5$. 
	
	Next assume $n=8$. First we show that $PRC(P_8) \neq 6$. Suppose the opposite is right. By Theorem \ref{bound-delta}, a $PRC(P_8)$-partition consists of two doubleton sets and four singleton sets, where each singleton set forms a perfect coalition with a doubleton set, and each doubleton set forms a perfect coalition with exactly two singleton sets. This contradicts the fact that the vertices $v_3$ and $v_6$ are not present in any such perfect coalitions. Hence, $PRC(P_8) \neq 6$. Now we show that $PRC(P_8)\neq 5$. Suppose, to the contrary, that $PRC(P_8)= 5$. Let $\pi$ be a $PRC(P_8)$-partition with $5$ sets. By Theorem \ref{bound-delta}, it suffices to examine the following cases.
	 \begin{itemize}
	 	\item  $\pi$ consists of a tripleton set, a doubleton set and three singleton sets. 
	 
	 Since $\gamma_{p} (P_8)=3$, it follows that each singleton set must form a $prc$-coalition with a non-singleton set. Now by Theorem \ref{bound-delta}, there exists two distinct singleton sets in $\pi$, say $V_1$,$V_2$, such that $V_1$ forms a $prc$-coalition with the doubleton set and $V_2$ forms a $prc$-coalition with the tripleton set. On the other hand, $P_8$ has six  perfect dominating sets of order $4$, namely, \\
	  	$N_1 =\lbrace v_1,v_2,v_5,v_8\rbrace$,
	  	$N_2=\lbrace v_1,v_4,v_5,v_8\rbrace$,
	  	$N_3=\lbrace v_1,v_4,v_7,v_8\rbrace$, 
	  	$N_4=\lbrace v_2,v_5,v_6,v_7\rbrace$,
	  	$N_5=\lbrace v_2,v_3,v_6,v_7\rbrace$,
	  	$N_6=\lbrace v_2,v_3,v_4,v_7\rbrace$, 
	and two perfect dominating sets of order $3$, namely,
	$T_1=\lbrace v_2,v_5,v_8\rbrace$ and $T_2=\lbrace v_1,v_4,v_7\rbrace$, where $N_i \cap T_j  \neq \emptyset$, for each $1 \leq i\leq 6$ and $j=1,2$, a contradiction. 
	 
	\item  $\pi$ consists of three doubleton sets and two singleton sets.
	 
	  Considering the fact that $P_8$ has exactly two perfect dominating sets of order $3$, we can easily derive a contradiction. Hence, $PRC(P_8)\neq 5$.

The partition $\lbrace \lbrace v_2,v_3,v_6\rbrace ,\lbrace v_1,v_4,v_5\rbrace ,\lbrace v_7\rbrace ,\lbrace v_8\rbrace \rbrace$ is a $prc$-partition for $P_8$, so $PRC(P_8)=4$.
\end{itemize}
Next assume $n=9$. First we show that $PRC(P_9) \neq 6$. Suppose that the converse is true. By Theorem \ref{bound-delta}, it suffices to examine the following cases.
\begin{itemize}
	
\item  $\pi$ consists of a tripleton set, a doubleton set and four singleton sets.

 Since $\gamma_{p} (P_9) =3$, each singleton set must form a perfect coalition with a non-singleton set. Thus, by Theorem \ref{bound-delta}, each non-singleton set must form a perfect coalition with two singleton sets. But $P_9$ has a unique perfect dominating set of order $3$, namely $\lbrace v_2,v_5,v_8\rbrace$, a contradiction. 

	\item  $\pi$ consists of three doubleton sets and three singleton sets.
	 
	  Similar to the previous case, we can easily derive a contradiction. Hence, $PRC(P_9) \neq 6$.
	 
	 On the other hand, the partition $\lbrace \lbrace v_3,v_6,v_9\rbrace ,\lbrace v_1,v_4,v_8\rbrace ,\lbrace v_2\rbrace ,\lbrace v_5\rbrace ,\lbrace v_7\rbrace \rbrace$ is a $prc$-partition for $P_9$, so $PRC(P_9)=5$.
\end{itemize}
Next assume $n=10$. First we show that $PRC(P_9) \neq 6$. Suppose that the converse is true. Using Theorem \ref{bound-delta} and the fact that $\gamma_{p} (P_{10})=4$, we only need to examine the case in which $\pi$ consists of two sets of order $3$ and four singleton sets. In this case, each singleton set must form a perfect coalition with a set of order $3$, and that each set of order $3$ must form a perfect coalition with exactly two singleton sets. Consequently, there must be four perfect dominating sets (name $A$,$B$,$C$ and $D$) of $P_{10}$ of order $4$ such that  these sets can be partitioned into two pair, say $(A,B)$ and $(C,D)$, with the property that the sets in each pair have exactly three vertices in common. But $P_{10}$ has four perfect dominating sets of order $4$, namely $V_1= \lbrace v_1,v_4,v_7,v_{10}\rbrace$, $V_2= \lbrace v_2,v_3,v_6,v_9\rbrace$, $V_3= \lbrace v_2,v_5,v_6,v_9\rbrace$ and $V_4= \lbrace v_2,v_5,v_8,v_9\rbrace$, where $V_1$ has no common vertex with any other sets, a contradiction.
The partition $\lbrace \lbrace v_1,v_7\rbrace ,\lbrace v_4,v_{10}\rbrace ,\lbrace v_2,v_9\rbrace ,\lbrace v_3,v_6\rbrace ,\lbrace v_5,v_8\rbrace \rbrace$ is a $prc$-partition for $P_{10}$, so  $PRC(P_{10})=5$. 

Next assume $n=11$. Using Theorem \ref{bound-delta}, the fact that $\gamma_{p} (P_{11})=4$ and the fact that $P_{11}$ has exactly two perfect dominating sets of order $4$, namely $\lbrace v_1,v_4,v_7,v_{10}\rbrace$ and $\lbrace v_2,v_5,v_8,v_{11} \rbrace$, it is not hart to verify that $PRC(P_{11})\neq 6$. Now the partition \\
 $\lbrace \lbrace v_1,v_4,v_8,v_{11}\rbrace ,\lbrace v_2,v_3,v_6,v_{10}\rbrace ,\lbrace v_5\rbrace ,\lbrace v_7\rbrace,\lbrace v_9\rbrace \rbrace$ is a $prc$-partition for $P_{11}$, so $PRC(P_{11})=5$.

Next assume $n=13$. The partition $\lbrace  \lbrace v_1,v_4,v_5,v_8,v_{12} \rbrace,\lbrace v_2,v_3,v_7,v_{10}, v_{13}\rbrace ,\lbrace v_6\rbrace ,\lbrace v_9\rbrace ,\lbrace v_{11}\rbrace \rbrace$ is a $prc$-partition for $P_{13}$, so $PRC(P_{13})\geq 5$. Now we show that $PRC(P_{13}) \neq 6$. Suppose that the converse is true. Let $\pi$ be a $PRC(P_{13})$-partition.
Note that $P_{13}$ has five perfect dominating sets of order $5$, namely, 
$S_1= \lbrace v_1,v_4,v_7,v_{10},v_{13} \rbrace $, 
$S_2= \lbrace v_2,v_3,v_6,v_9,v_{12} \rbrace $, 
$S_3= \lbrace v_2,v_5,v_6,v_9,v_{12} \rbrace $, 
$S_4= \lbrace v_2,v_5,v_8,v_9,v_{12} \rbrace $, 
$S_5= \lbrace v_2,v_5,v_8,v_{11},v_{12} \rbrace $, 
and twenty perfect dominating sets of order $6$, namely, 
$M_1 =\lbrace v_1,v_2,v_5,v_6,v_9,v_{12}\rbrace$, 
$M_2 =\lbrace v_2,v_3,v_6,v_7,v_{10},v_{13}\rbrace$, 
$M_3 =\lbrace v_1,v_2,v_5,v_8,v_9,v_{12}\rbrace$, \\ 
$M_4 =\lbrace v_2,v_3,v_6,v_9,v_{10},v_{13}\rbrace$, 
$M_5 =\lbrace v_1,v_2,v_5,v_8,v_{11},v_{12}\rbrace$, 
$M_6 =\lbrace v_2,v_3,v_6,v_9,v_{12},v_{13}\rbrace$, \\ 
$M_7 =\lbrace v_1,v_4,v_5,v_8,v_9,v_{12}\rbrace$, 
$M_8 =\lbrace v_2,v_3,v_6,v_9,v_{12},v_{13}\rbrace$, 
$M_9 =\lbrace v_1,v_4,v_5,v_8,v_{11},v_{12}\rbrace$,  \\
$M_{10} =\lbrace v_2,v_3,v_6,v_9,v_{12},v_{13}\rbrace$, 
$M_{11} =\lbrace v_1,v_4,v_7,v_8,v_{11},v_{12}\rbrace$, 
$M_{12} =\lbrace v_2,v_5,v_8,v_9,v_{12},v_{13}\rbrace$,  \\
$M_{13} =\lbrace v_1,v_2,v_3,v_6,v_9,v_{12}\rbrace$, 
$M_{14} =\lbrace v_2,v_3,v_4,v_7,v_{10},v_{13}\rbrace$, 
$M_{15} =\lbrace v_1,v_4,v_5,v_6,v_9,v_{12}\rbrace$,  \\
$M_{16} =\lbrace v_2,v_5,v_6,v_7,v_{10},v_{13}\rbrace$, 
$M_{17} =\lbrace v_1,v_4,v_7,v_8,v_9,v_{12}\rbrace$, 
$M_{18} =\lbrace v_2,v_5,v_8,v_9,v_{10},v_{13}\rbrace$,  \\ 
$M_{19} =\lbrace v_1,v_4,v_7,v_{10},v_{11},v_{12}\rbrace$, 
$M_{20} =\lbrace v_2,v_5,v_8,v_{11},v_{12},v_{13}\rbrace$.

Observe that for each $1\leq i\leq 5$ and $1 \leq j\leq 20$, $S_i \cap M_j \neq \emptyset$.
 Now using Theorem \ref{bound-delta}, and the fact that $\gamma_{p} (P_{13})=5$, the only non-trivial cases to examine are as follows.

\begin{itemize}
\item  $\pi$ consists of a set of order $5$, a set of order $4$ and four singleton sets. 

By Theorem \ref{bound-delta}, each singleton set must form a $prc$-coalition with exactly one non-singleton set, and each non-singleton set must form a $prc$-coalition with exactly two singleton sets. Thus, $P_{13}$ must contain two perfect dominating sets $A$ and $B$, such that $\lvert A\rvert =5$, $\lvert B\rvert =6$ and $A \cap B =\emptyset$, a contradiction.

\item  $\pi$ consists of two sets of order $4$, a doubleton set and four singleton sets. 

An argument similar to the one used in previous case indicates that this case is also impossible.

\item  $\pi$ consists of a set of order $4$, a tripleton set, two doubleton sets and two singleton sets. 

Considering Theorem \ref{bound-delta}, and the fact that $\gamma_{p} (P_{13})=5$, we deduce that each singleton set must form a $prc$-coalition with the set of order $4$, and each doubleton set must form a $prc$-coalition with the set of order $3$. This implies that $P_{13}$ contains four perfect dominating sets (name $A$,$B$,$C$ and $D$) of order $5$ such that $A \cap B =\emptyset$, and $C \cap D =\emptyset$ which is a contradiction since for each $2 \leq i \leq j \leq 5$, we have $S_i \cap S_j \neq \emptyset$.  Hence, $PRC(P_{13})\neq 6$, and so $PRC(P_{13})=5$.
\end{itemize}
If $n\geq 12$, and   $n \equiv 0  \pmod{2} $, then the partition$
\lbrace V_1=\lbrace v_2,v_6,v_9\rbrace \cup \lbrace v_{4k} : k\geq 3\rbrace \cup \lbrace v_{4k+1} : k\geq 3\rbrace , V_2=\lbrace v_1,v_4,v_7\rbrace \cup \lbrace v_{4k-1} : k\geq 3\rbrace \cup \lbrace v_{4k+2} : k\geq 3 \rbrace , V_3=\lbrace v_5\rbrace , V_4=\lbrace v_3\rbrace , V_5=\lbrace v_{10}\rbrace ,V_6 =\lbrace v_8\rbrace \rbrace  
$ is a $prc$-partition of order $6$ for $P_n$.

Finally, if  $n\geq 15$, and   $n \equiv 1  \pmod{2} $, then the partition \\
$
\lbrace V_1=\lbrace v_2,v_9,v_{12}\rbrace \cup \lbrace v_{4k-1} : k\geq 4\rbrace \cup \lbrace v_{4k} : k\geq 4\rbrace , V_2=\lbrace v_1,v_4,v_7,v_{10}\rbrace \cup \lbrace v_{4k-2} : k\geq 4\rbrace \cup \lbrace v_{4k+1} : k\geq 4 \rbrace , V_3=\lbrace v_5,v_8\rbrace , V_4=\lbrace v_3,v_6\rbrace , V_5=\lbrace V_{13}\rbrace ,V_6 =\lbrace v_{11}\rbrace \rbrace  
$ is a $prc$-partition of order $6$ for $P_n$. Thus the proof is observed.
\end{proof}

\begin{lemma} \emph{\cite{ref6}} \label{lemcycle}
	For any cycle $C_n$, $C(C_n)\leq 6$.
\end{lemma}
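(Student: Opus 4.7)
The plan is to mimic the path-case argument of Lemma \ref{lempath}, exploiting the fact that $\Delta(C_n)=2$. By the Haynes \emph{et al.} bound that each set of a $c$-partition is in at most $\Delta(G)+1$ coalitions, every set of a $c$-partition of $C_n$ has at most $3$ coalition partners. For $n\le 6$ the inequality $C(C_n)\le n\le 6$ is immediate, so I would focus on $n\ge 7$.

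Suppose for contradiction that $\pi=\{V_1,\dots,V_k\}$ is a $c$-partition of $C_n$ with $k\ge 7$. Since $\Delta(C_n)=2<n-1$, no singleton is a dominating set, and so every $V_i\in\pi$ must have at least one coalition partner. For each $V_i$, fix a vertex $x_i$ not dominated by $V_i$; every partner of $V_i$ must contain a vertex of $N[x_i]=\{x_i^-,x_i,x_i^+\}$, and since the partners are pairwise disjoint and $|N[x_i]|=3$, the set $V_i$ has at most three partners, with the extremal case forcing the three partners to distribute the vertices of $N[x_i]$ one vertex apiece.

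I would then consider the coalition graph $H$ on the vertex set $\pi$, which satisfies $\delta(H)\ge 1$ and $\Delta(H)\le 3$. Because $|V(C_n)|=n$ and $|\pi|\ge 7$, an averaging argument shows that $\pi$ must contain many singleton blocks. The partner of a singleton $\{v\}$ must, together with $\{v\}$, dominate $C_n$, and so it must dominate the arc $V(C_n)\setminus N[v]$ of $n-3$ vertices on its own. Combined with the at-most-three-partners restriction on every block, this forces strong structural constraints on the arcs of $\pi$ as one reads them around the cycle.

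The main obstacle I anticipate is the combinatorial case analysis on cyclic arc patterns: when $\pi$ has seven or more blocks, one must rule out every way the blocks can be interleaved around $C_n$, showing that in every configuration some block fails to find a valid partner. I would organize this as a finite number of cases indexed by the block-size sequence $(|V_1|,\dots,|V_k|)$ read around the cycle, using the local restriction coming from each $N[x_i]$ together with the dominating-set requirement for each coalition to derive a contradiction.
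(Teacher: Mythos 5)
The paper does not prove this lemma at all: it is quoted verbatim from \cite{ref6} (Haynes et al., \emph{Coalition graphs of paths, cycles and trees}), so there is no in-paper argument to compare yours against. Judged on its own, your proposal is an outline rather than a proof, and it has two genuine gaps. First, the claim that ``an averaging argument shows that $\pi$ must contain many singleton blocks'' is unjustified and in fact false in general: with $|\pi|=k\ge 7$ and $n$ large, the average block size is $n/k$, which is unbounded, so nothing forces any singleton blocks to exist. Whatever role this step was meant to play in the argument, it cannot be played by averaging.

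Second, and more importantly, the entire substance of the proof is deferred to ``a finite number of cases indexed by the block-size sequence $(|V_1|,\dots,|V_k|)$ read around the cycle.'' For arbitrary $n$ and $k\ge 7$ the set of such sequences is infinite, and even the set of interleaving patterns of $k\ge 7$ blocks around $C_n$ is unbounded, so it is not clear the case analysis is finite, let alone how each case is killed. The ingredients you do establish correctly (no singleton dominates for $n\ge 4$; every non-dominating block has at most $\Delta+1=3$ partners, all meeting $N[x_i]$; a partner of a singleton $\{v\}$ must dominate $V(C_n)\setminus N[v]$ alone) give, by themselves, only a bound of the form: every block either meets some fixed $N[v]$ ($\le 3$ blocks) or is a partner of one of those, yielding $|\pi|\le 3+3\cdot 3=12$, not $6$. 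Getting from $12$ down to $6$ requires an additional structural idea (e.g.\ exploiting that a coalition $V_i\cup V_j$ must contain at least $\lceil n/3\rceil$ vertices, together with control of how partners can be shared), and that idea is absent from the proposal. As written, this is a plausible plan with a broken intermediate step, not a proof.
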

Using Lemma \ref{lemcycle} and Observation \ref{bound-C}, we have the following.
\begin{corollary}\label{l-cycle}
	For any cycle $C_n$, $PRC(C_n)\leq 6$.
\end{corollary}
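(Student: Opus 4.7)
The plan is to observe that this corollary follows immediately by chaining two results already established in the excerpt, so the proof reduces to a one-line argument rather than any genuine case analysis. First I would invoke Observation \ref{bound-C}, which asserts that $PRC(G)\leq C(G)$ holds for every graph $G$, and in particular for $G=C_n$. Then I would apply Lemma \ref{lemcycle} (quoted from \cite{ref6}), which gives $C(C_n)\leq 6$ for every cycle. Composing the two inequalities yields $PRC(C_n)\leq C(C_n)\leq 6$, which is exactly the claim.

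Because both ingredients are already stated, there is essentially no obstacle: no induction on $n$, no case split on residues modulo small numbers, and no structural argument about perfect dominating sets in $C_n$ is required. The only thing worth double-checking in writing is that Observation \ref{bound-C} applies with no hypothesis on $G$ (it does, since every $prc$-partition is in particular a $c$-partition, so a $PRC(G)$-partition witnesses $C(G)\geq PRC(G)$), and that Lemma \ref{lemcycle} is stated for all $n$ for which $C_n$ is defined (it is). Thus the proof is a two-line display chaining the two bounds, followed by a $\square$.

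If a slightly more self-contained presentation were preferred, I would spell out the first step: any $PRC(C_n)$-partition $\pi$ is, by Definitions \ref{prc-def} and \ref{prcp-def}, a partition in which every non-singleton-dominating block has a partner whose union with it dominates $C_n$; dropping the perfectness conditions (ii) and (iii)-as-perfect-dominating leaves exactly the definition of a $c$-partition, so $\pi$ witnesses $C(C_n)\geq |\pi|=PRC(C_n)$. Combined with $C(C_n)\leq 6$, the conclusion follows. I expect no main obstacle here; the corollary is purely a bookkeeping consequence of the two preceding results.
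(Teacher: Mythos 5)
Your proposal is correct and matches the paper exactly: the corollary is stated as an immediate consequence of Observation \ref{bound-C} ($PRC(G)\leq C(G)$) combined with Lemma \ref{lemcycle} ($C(C_n)\leq 6$), which is precisely the chain $PRC(C_n)\leq C(C_n)\leq 6$ you give. No further argument is needed.
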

\begin{lemma} \label{lem-c}
	If $n\geq 9$ and $n \neq 11$, then $PRC(C_n) =6$.
\end{lemma}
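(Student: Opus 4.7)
The plan is to use Corollary~\ref{l-cycle} for the upper bound $PRC(C_n) \le 6$ and to construct, for each admissible $n$, an explicit $prc$-partition of $C_n$ of order exactly $6$. A useful preliminary observation is the structure of perfect dominating sets in a cycle: if $S$ is a PDS of $C_n$ and we examine the cyclic sequence of positions of $S$, the ``gap'' of non-$S$ vertices between any two consecutive elements of $S$ must be $0$ or $2$ (a gap of $1$ would leave the intermediate vertex with two $S$-neighbors, while a gap of at least $3$ would leave an interior vertex undominated). Hence $n = |S| + 2b$ for some non-negative integer $b$, so $|S|$ and $n$ have the same parity and the smallest possible PDS has size $\lceil n/3 \rceil$. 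I then split into cases based on $n \bmod 3$.

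For $n \equiv 0 \pmod{3}$ with $n \ge 9$, the three residue classes $A_j = \{v_i : i \equiv j \pmod{3}\}$ for $j \in \{0,1,2\}$ are pairwise disjoint perfect dominating sets of $C_n$. Splitting each $A_j$ into a singleton together with its complement within $A_j$ yields a partition into six parts, and the two parts obtained from the same $A_j$ form a perfect coalition: their union is $A_j$, neither part dominates $C_n$ when $n \ge 9$, and condition~(ii) of Definition~\ref{prc-def} is immediate since each non-singleton part is a set of pairwise non-adjacent vertices.

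For $n \not\equiv 0 \pmod{3}$ and $n \ne 11$, the ``three stripes'' construction fails and any PDS must contain at least one pair of adjacent vertices. I would handle the small cases $n = 10, 13, 14$ by explicit partitions; for example, $\{\{v_1,v_5,v_8\},\{v_2\},\{v_4\},\{v_3,v_6,v_{10}\},\{v_7\},\{v_9\}\}$ is a $prc$-partition of $C_{10}$ (the tripleton $\{v_1,v_5,v_8\}$ partners with each of $\{v_2\}$ and $\{v_4\}$, and $\{v_3,v_6,v_{10}\}$ partners with each of $\{v_7\}$ and $\{v_9\}$, every pairing producing a PDS of size $4$), and an analogous partition of $C_{13}$ can be built around two PDSes that share a common $4$-vertex core together with a tripleton partnering two doubletons. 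For larger $n$ in the residue classes $1$ and $2$ modulo $3$, I would extend these templates periodically by inserting blocks of three consecutive vertices into one of the long parts, preserving all three conditions of Definition~\ref{prc-def}. The main obstacle is the parity/counting argument behind the exclusion $n = 11$: when $n$ is odd, Theorem~\ref{bound-delta} together with the fact that every PDS has the parity of $n$ forces a $prc$-partition of order $6$ to consist of exactly three odd-sized and three even-sized parts with coalitions crossing the parity, and one must check that this constraint is realizable for $n = 13$ (where a tight explicit partition exists) but cannot be realized for $n = 11$.
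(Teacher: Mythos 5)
Your construction for $n \equiv 0 \pmod 3$ is correct and arguably cleaner than the paper's: each residue class $A_j=\{v_i : i\equiv j \pmod 3\}$ is an independent perfect dominating set, and splitting it into a singleton $\{v\}$ and $A_j\setminus\{v\}$ gives two non-dominating parts (the two neighbours of $v$ lose their only $A_j$-neighbour) whose union is $A_j$, so the six resulting parts form a $prc$-partition. The paper instead exhibits explicit partitions for $C_9$, $C_{12}$, $C_{15}$ and handles all remaining $n$ by an induction from $C_k$ to $C_{k+4}$.

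The gap is in the residue classes $n\not\equiv 0\pmod 3$, which is where the real content of the lemma lies. You give a correct explicit partition only for $C_{10}$ (it coincides with the paper's $\pi_{10}$); the partitions for $C_{13}$ and $C_{14}$ are described but not exhibited. More seriously, the extension mechanism you propose --- inserting a block of three consecutive new vertices into one of the long parts --- fails as stated. If $v_{k+1},v_{k+2},v_{k+3}$ are all placed in a single part $V_1$, then any coalition $V_i\cup V_j$ of the partition with $V_1\notin\{V_i,V_j\}$ ceases to be a dominating set of the enlarged cycle, because the middle vertex $v_{k+2}$ has both of its neighbours inside $V_1$ and hence none in $V_i\cup V_j$. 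Such a coalition must exist: by Theorem \ref{bound-delta} each part has at most $\Delta(C_n)=2$ partners, so not all coalitions of a partition of order $6$ can involve one fixed part. This is precisely the difficulty the paper's induction is engineered around: it adds \emph{four} vertices per step, two to a part $V_1$ and two to a part $V_4$, maintains the invariant that every coalition involves $V_1$ or $V_4$ (the ``proper'' property, together with $v_1\in V_1$ and $v_n\in V_4$), and places the new vertices so that each vertex outside a coalition union keeps exactly one neighbour in it. To repair your argument you would need a comparable invariant preserved by your insertion, or a separate explicit periodic construction for each residue class. (The parity discussion of $n=11$ at the end is not needed here: the lemma makes no claim about $C_{11}$, which is treated in the subsequent theorem.)
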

\begin{proof}
		Consider the cycle $C_n$ with $V(C_n)=\lbrace v_1,v_2,\dots ,v_n\rbrace$ and $E(C_n)=\lbrace v_i v_{i+1} : 1\leq i\leq n-1\rbrace \cup \lbrace v_1 v_n\rbrace$.
	The partitions \\
	$
	\pi_9= \lbrace \lbrace v_1,v_7\rbrace, \lbrace v_2,v_8\rbrace ,\lbrace v_3 ,v_9\rbrace ,\lbrace v_4\rbrace ,\lbrace v_5\rbrace, \lbrace v_6\rbrace \rbrace,
$ \\
	$
	\pi_{12}= \lbrace \lbrace v_1,v_7\rbrace, \lbrace v_2,v_8\rbrace ,\lbrace v_3 ,v_9\rbrace ,\lbrace v_4,v_{10}\rbrace ,\lbrace v_5,v_{11}\rbrace, \lbrace v_6,v_{12}\rbrace \rbrace 
	$ and \\
	$
	\pi_{15}= \lbrace \lbrace v_1,v_7,v_{13}\rbrace, \lbrace v_2,v_8,v_{14}\rbrace ,\lbrace v_3 ,v_9,v_{15}\rbrace ,\lbrace v_4,v_{10}\rbrace ,\lbrace v_5,v_{11}\rbrace, \lbrace v_6,v_{12}\rbrace \rbrace
	$ \\
	are $prc$-partitions for $C_9$, $C_{12}$ and $C_{15}$, respectively.
	Thus, using Corollary \ref{l-cycle}, we have $PRC(C_9)=PRC(C_{12})=PRC(C_{15})=6$. Now let $\pi =\lbrace V_1,V_2,\dots ,V_6\rbrace$ be a $prc$-partition for $C_n$. We say that $\pi$ is proper, if it has the following properties.
	\begin{enumerate}
		\item[(i)] 
		$V_2$ and $V_3$ are $prc$-partners of $V_1$.
		\item[(ii)] 
		$V_5$ and $V_6$ are $prc$-partners of $V_4$.
		\item[(iii)] 
		$v_1 \in V_1$ and $v_n \in V_4$.
	\end{enumerate}
	To complete the proof, we show that if $n\geq 10$ and $n\neq 11,12$, then $C_n$ has a proper $prc$-partition. We proceed by induction on $n$. Consider the partitions \\
	$\pi_{10} = \lbrace V_1= \lbrace v_1,v_5,v_8\rbrace ,V_2=\lbrace v_4\rbrace, V_3=\lbrace v_2\rbrace , V_4=\lbrace v_3,v_6,v_{10}\rbrace ,V_5=\lbrace v_7\rbrace ,V_6 =\lbrace v_9\rbrace \rbrace, $ \\
	$\pi_{13} = \lbrace V_1= \lbrace v_1,v_5,v_8,v_{11}\rbrace ,V_2=\lbrace v_2\rbrace, V_3=\lbrace v_4\rbrace , V_4=\lbrace v_3,v_6,v_{13}\rbrace ,V_5=\lbrace v_9,v_{12}\rbrace ,V_6 =\lbrace v_7, v_{10}\rbrace \rbrace,$ \\
$\pi_{16} = \lbrace V_1= \lbrace v_1,v_5,v_8,v_{11},v_{14}\rbrace ,V_2=\lbrace v_4\rbrace, V_3=\lbrace v_2,\rbrace , V_4=\lbrace v_3,v_6,v_{16}\rbrace ,V_5=\lbrace v_9,v_{12},v_{15}\rbrace ,V_6 =\lbrace v_7, v_{10},v_{13}\rbrace \rbrace$ and	\\
$\pi_{19} = \lbrace V_1= \lbrace v_1,v_8,v_{11},v_{14}, v_{17}\rbrace ,V_2=\lbrace v_4,v_7\rbrace, V_3=\lbrace v_2,v_5\rbrace , V_4=\lbrace v_3,v_6,v_9,v_{19}\rbrace ,\linebreak V_5=\lbrace v_{12},v_{15},v_{18}\rbrace ,V_6 =\lbrace v_{10},v_{13},v_{16}\rbrace \rbrace $. \\	
These partitions are respectively, proper $prc$-partitions for $C_{10}$, $C_{13}$, $C_{16}$ and $C_{19}$. This establishes the base case. Now assume that $\pi =\lbrace V_1, V_2, \dots ,V_6\rbrace$ is a proper $prc$-partition for $C_k$. Consider the partition $\pi^\prime =\lbrace V_1 \cup \lbrace v_{k+1},v_{k+2}\rbrace  , V_2,V_3,V_4 \cup \lbrace v_{k+3} ,v_{k+4} \rbrace ,V_5,V_6\rbrace $. One can observe that $\pi^\prime$ is a proper $prc$-partition for $C_{k+4}$. This completes the proof.
\end{proof}
\begin{theorem} \label{prc-path}  
	For the cycle $C_n$, 
	$$
	PRC(C_n)= \begin{cases}

		n & if \  n=3,4,6, \\
		3 & if \  n=5, \\
		4 & if \  n=8, \\
		5 & if \  n=7,11, \\
		6 & otherwise.
	\end{cases}
	$$ 
\end{theorem}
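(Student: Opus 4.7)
The plan is to combine Lemma \ref{lem-c} (which already settles $PRC(C_n)=6$ for $n\ge 9$, $n\ne 11$) with a finite case analysis for $n\in\{3,4,5,6,7,8,11\}$. For $n\in\{3,4,6\}$ the singleton partition is a $prc$-partition: in $C_3$ every singleton dominates; in $C_4$ the union of any two consecutive singletons $\{v_i\}\cup\{v_{i+1}\}$ is a perfect dominating set; and in $C_6$ each antipodal pair $\{v_i\}\cup\{v_{i+3}\}$ is a perfect dominating set. For $n=5$ I would first enumerate the perfect dominating sets of $C_5$ (only the five triples of three consecutive vertices, together with $V(C_5)$), and then a short application of Theorem \ref{bound-delta} rules out $prc$-partitions of size $4$ or $5$, while the $3$-partition displayed in the statement gives $PRC(C_5)\ge 3$.

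For $n\in\{7,8\}$ the main tool is Theorem \ref{bound-delta}: since $\Delta(C_n)=2$, no set of any $prc$-partition has more than two $prc$-partners. For $C_7$, a hypothetical $6$-partition must consist of one doubleton and five singletons, and since $\gamma_p(C_7)=3$ every singleton is forced to partner with the unique doubleton, contradicting the bound; hence $PRC(C_7)\le 5$, and one exhibits the matching partition $\{\{v_1,v_5\},\{v_3,v_6\},\{v_2\},\{v_4\},\{v_7\}\}$. For $C_8$ the same style of argument, now using $\gamma_p(C_8)=4$, rules out every partition of $8$ into $5$ parts, and the size-$4$ partition $\{\{v_1,v_2\},\{v_3,v_4\},\{v_5,v_6\},\{v_7,v_8\}\}$ realizes the bound.

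The delicate case is $n=11$. The key observation, which I would establish first, is that every perfect dominating set $S\ne V(C_n)$ consists of ``in-blocks'' of consecutive vertices of $S$ separated by ``out-blocks'' of exactly two vertices: an isolated out-vertex has two $S$-neighbors, and three consecutive out-vertices leave the middle vertex with no $S$-neighbor. Hence $|S|=n-2k$ for some $k\ge 1$, and for $n=11$ this forces $|S|\in\{5,7,9,11\}$, always odd. Two consequences follow: if $V_i,V_j$ form a perfect coalition then $|V_i|+|V_j|$ is odd, so their sizes have opposite parity; and every singleton's partner must have even size at least $4$. Combining this parity constraint with the $\Delta=2$ bound of Theorem \ref{bound-delta} eliminates every partition of $11$ into $6$ parts (in each case the number of singletons strictly exceeds twice the number of parts of even size $\ge 4$), giving $PRC(C_{11})\le 5$.

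For the matching lower bound on $C_{11}$ I would exhibit the $5$-partition
\[\pi=\bigl\{\{v_1,v_4,v_5,v_8\},\ \{v_2,v_3,v_6\},\ \{v_7,v_{10}\},\ \{v_9\},\ \{v_{11}\}\bigr\},\]
where the $4$-set pairs with each of the singletons $\{v_9\}$ and $\{v_{11}\}$ via the perfect dominating sets $\{v_1,v_4,v_5,v_8,v_9\}$ and $\{v_1,v_4,v_5,v_8,v_{11}\}$, and the tripleton pairs with the doubleton via the perfect dominating set $\{v_2,v_3,v_6,v_7,v_{10}\}$. The main obstacle is precisely this last case $n=11$: Theorem \ref{bound-delta} alone is too weak to exclude a $6$-partition, and it is the odd-size constraint on perfect dominating sets of $C_{11}$ that finally closes the gap.
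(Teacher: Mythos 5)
Your proposal is correct and follows essentially the same route as the paper: Lemma \ref{lem-c} disposes of $n\ge 9$, $n\ne 11$, and the remaining small cases are handled by combining Theorem \ref{bound-delta} with the structure of perfect dominating sets of $C_n$ and exhibiting explicit partitions for the lower bounds (your alternative partitions for $C_7$, $C_8$ and $C_{11}$ all check out, and your out-block/parity observation makes the exclusion of a $6$-partition of $C_{11}$ fully rigorous where the paper merely asserts it). The only slip is the reference to ``the $3$-partition displayed in the statement'' for $C_5$ --- no partition is displayed there, so you still need to exhibit one, e.g.\ $\{\{v_1\},\{v_2,v_3\},\{v_4,v_5\}\}$.
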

\begin{proof}
		Consider the cycle $C_n$ with $V(C_n)=\lbrace v_1,v_2,\dots ,v_n\rbrace$ and $E(C_n)=\lbrace v_i v_{i+1} : 1\leq i\leq n-1\rbrace \cup \lbrace v_1 v_n\rbrace$.
	The result is obvious for $n \in \lbrace 3,4,6\rbrace$. Now let $n=5$. Note that the singleton vertex partition of $C_5$ is not a $prc$-partition for $C_5$, so $PRC(C_5) \neq 5$. Note also that since $C_5$ has no perfect dominating set of order $2$, it follows from Theorem \ref{bound-delta} that $PRC(C_5) \neq 4$. The partition $\lbrace \lbrace v_1\rbrace ,\lbrace v_2,v_3\rbrace ,\lbrace v_4,v_5\rbrace \rbrace$ is a $prc$-partition for $C_5$, so $PRC(C_5)=3$.

Next assume $n=7$. Considering Theorem \ref{bound-delta} and $\gamma_{p} (C_7)=3$, we deduce that $PRC(C_7) \neq 6$. Now we show that $PRC(C_7) = 5$.  Let $\pi =\lbrace V_1,V_2,V_3,V_4,V_5\rbrace$ where $V_1=\{v_1,v_7\}$, $V_2=\{v_2,v_6\}$, 
$V_3=\{v_3\}$, $V_4=\{v_4\}$, $V_5=\{v_5\}$. Then $\pi$ is a $prc$-partition for $C_7$. 

	Next assume $n=8$. Considering Theorem \ref{bound-delta} and $\gamma_{p} (C_8)=4$, it is not difficult to see that $PRC(C_8) \neq 6$, and that $PRC(C_8) \neq 5$. The partition $\lbrace \lbrace v_1,v_2,v_5\rbrace ,\lbrace v_6\rbrace ,\lbrace v_3,v_4,v_7\rbrace ,\lbrace v_8\rbrace \rbrace $ is a $prc$-partition  for $C_8$, so $PRC(C_8)=4$.

	Next assume $n=11$. Considering Theorem \ref{bound-delta} and $\gamma_{p} (C_{11})=5$, it is not difficult to see that $PRC(C_{11}) \neq 6$. The partition $\lbrace \lbrace v_1,v_4,v_8,v_{11}\rbrace ,\lbrace v_2,v_3,v_6,v_{10}\rbrace ,\lbrace v_5\rbrace ,\lbrace v_9\rbrace, \lbrace v_7\rbrace \rbrace $ is a $prc$-partition  for $C_{11}$, so $PRC(C_{11})=5$.
	
	Finally, applying Lemma \ref{lem-c}, the desired result follows.  
\end{proof}
\section{Graphs with large perfect coalition number}
Our aim in this section is to characterize graphs with large perfect coalition number in some families of graphs. 
The following observation characterizes disconnected graphs with maximum perfect coalition number.
\begin{observation} \label{dis-n}
	Let $G$ be a disconnected graph of order $n$. Then  $PRC(G)=n$ if  and only if $G\simeq K_s \cup K_r$, for some positive integers  $r,s\geq 1$.
\end{observation}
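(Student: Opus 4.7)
The plan is to prove both directions of the equivalence, with the forward implication being the substantive one. For the forward direction, suppose $G$ is disconnected of order $n$ with $PRC(G)=n$. Then the only partition of $V(G)$ of size $n$ is the singleton partition $\pi=\{\{v\}:v\in V(G)\}$, and this must be a $prc$-partition. Since $G$ is disconnected and $n\geq 2$, no single vertex can dominate $G$ (any $v$ misses every other component), so every singleton $\{v\}$ must have a $prc$-partner $\{u\}$; in particular $\{v,u\}$ is a perfect dominating set of size $2$. The problem therefore reduces to determining what size-$2$ perfect dominating sets look like in a disconnected graph.

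Next I would analyze this structure. If $v$ and $u$ were in the same component, then any vertex in another component would lie outside $\{v,u\}$ and have no neighbor in $\{v,u\}$, contradicting perfect domination. Hence $v$ and $u$ lie in distinct components, say $C_v$ and $C_u$. If there were a third component, its vertices would again be undominated, so $G$ has exactly two components $C_v$ and $C_u$. Moreover, every vertex in $C_v\setminus\{v\}$ must have exactly one neighbor in $\{v,u\}$; since it is not adjacent to $u$, it must be adjacent to $v$. Thus $v$ is adjacent to every other vertex of $C_v$, and symmetrically $u$ to every other vertex of $C_u$. Applying this to every vertex of $G$, each vertex is adjacent to all others in its component, so both components are complete; writing $s=|C_v|$ and $r=|C_u|$ gives $G\simeq K_s\cup K_r$.

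For the reverse direction, assume $G\simeq K_s\cup K_r$ with components $A$ and $B$, and take the singleton partition. Given any $v\in A$, choose any $u\in B$. Neither $\{v\}$ nor $\{u\}$ is a dominating set (each misses an entire component), condition (ii) of Definition \ref{prc-def} is vacuous since the sets are singletons, and $\{v,u\}$ is a perfect dominating set because every vertex of $A\setminus\{v\}$ is adjacent only to $v$ within $\{v,u\}$ (by completeness of $A$ and absence of edges to $B$), and likewise on the $B$ side. Hence $\{v\}$ and $\{u\}$ form a perfect coalition, the singleton partition is a $prc$-partition, and $PRC(G)\geq n$; combined with the trivial upper bound, $PRC(G)=n$.

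The main obstacle is the structural step in the forward direction: extracting from the mere existence of a size-$2$ perfect dominating set containing each prescribed vertex the conclusion that $G$ has precisely two components, each complete. This requires applying the perfect-domination constraint three times (to rule out a common component, to rule out a third component, and to force adjacency inside each component), but once these observations are in place, the rest is straightforward verification.
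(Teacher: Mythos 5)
Your proof is correct and complete; the paper states this result as an observation without proof, and your argument (forcing the singleton partition, noting no singleton dominates a disconnected graph, and reading off the structure of a two-element perfect dominating set) is exactly the reasoning the observation implicitly relies on. Both directions, including the edge cases $r=1$ or $s=1$, check out.
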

\subsection{Graphs $G$ with $\delta(G)=1$ and $PRC(G)=\lvert V(G)\rvert$}
In this subsection, we characterize graphs with minimum degree one and  with maximum perfect coalition number. For this purpose,
we define the family of graphs $\mathcal{B}$  as follows.
\begin{definition}
	Let $G$ be a graph with $\delta (G)=1$. Let $x$ be a leaf of $G$ and let $y$ be the support vertex of $x$. Then $G \in \mathcal{B}$, if and only if $G$ has the following properties.
	\begin{enumerate}
		\item[(i)]  
		$N(y)$ is an independent set of $G$. 
		\item[(ii)]  
		$V(G) \setminus (N(y) \cup \lbrace x,y\rbrace)$ induces a clique in $G$. 
		\item[(iii)]  
		$[N(y),V(G) \setminus (N(y) \cup \lbrace x,y\rbrace)]$ is full.
	\end{enumerate}
\end{definition}
\begin{theorem} \label{thm-delta1}
	Let $G$ be a connected graph of order $n$ with $\delta (G)=1$. Let $x$ be a leaf of $G$, and let $y$ be the support vertex of $x$. Then $PRC(G)=n$ if and only if either $G=K_2$, or $G \in \mathcal{B}$. 
\end{theorem}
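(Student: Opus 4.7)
My approach is to prove the biconditional directly, with the backward direction being routine verification and the forward direction requiring a careful case analysis.

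For the backward direction, if $G=K_2$ then the singleton partition $\{\{x\},\{y\}\}$ trivially gives a $prc$-partition, since both singletons are dominating sets. If $G\in \mathcal{B}$, I would exhibit the singleton partition $\pi = \{\{v\} : v \in V(G)\}$ and verify it is a $prc$-partition. Using the natural decomposition $V(G) = \{x\} \cup \{y\} \cup A \cup B$, where $A = N(y)\setminus\{x\}$ and $B = V(G)\setminus N[y]$, I would propose the pairings $\{x\}$ with any $\{b\}$ for $b\in B$, and $\{y\}$ with any $\{a\}$ for $a \in A$, together with their symmetric counterparts from the other side. For each pairing I would verify Definition~\ref{prc-def}: non-domination of each singleton, the at-most-one-neighbor condition (automatic for singletons), and that the union forms a perfect dominating set. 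The independence of $N(y)$, the cliqueness of $B$, and the fullness of $[A,B]$ are exactly the properties that force the ``exactly one neighbor'' condition.

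For the forward direction, I would assume $PRC(G) = n$ and $G \neq K_2$, so that the singleton partition is a $prc$-partition and $n \geq 3$. Since $\{x\}$ is not dominating it must have a coalition partner $\{u\}$ with $\{x,u\}$ a perfect dominating set and $\{u\}$ also non-dominating. Because the only neighbor of $x$ is $y$, case analysis on whether $u=y$, $u \in N(y)\setminus\{x\}$, or $u\notin N[y]$, combined with the perfect-domination and non-domination requirements, will force $u \notin N[y]$ and will force $u$ to be adjacent to every vertex of $V(G)\setminus\{x,y,u\}$. I would then examine the partners of the remaining singletons in turn: the partners of vertices in $V(G)\setminus N[y]$, together with the constraint just derived, will force $V(G)\setminus N[y]$ to be a clique; the partner of $\{y\}$ will yield the full bipartite property between $N(y)\setminus\{x\}$ and $V(G)\setminus N[y]$; and the partners of vertices in $N(y)\setminus\{x\}$ will force $N(y)$ to be independent.

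The principal obstacle is the forward direction, specifically the bookkeeping needed to extract all three structural conditions of $\mathcal{B}$ simultaneously from the existence of perfect-coalition partners for each non-dominating singleton. The ``exactly one neighbor'' requirement is significantly more restrictive than ordinary domination, and at each step one must rule out candidate partners that would double-dominate some vertex; this means that the analyses for different vertex types interact and must be carried out in a coherent order. Additional care is needed to ensure that the structural conclusions do not depend on the particular choice of leaf $x$, so that the conditions defining $\mathcal{B}$ are intrinsic to $G$.
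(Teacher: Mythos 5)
Your proposal follows essentially the same route as the paper: both directions rest on the singleton partition, and the forward direction exploits the fact that every singleton other than $\{x\}$ and $\{y\}$ must partner with $\{x\}$ or $\{y\}$ in order to dominate the leaf $x$, from which the three defining properties of $\mathcal{B}$ fall out. One small correction to your bookkeeping: the fullness of $[N(y)\setminus\{x\},\,V(G)\setminus N[y]]$ comes from the coalitions $\{a\}\cup\{y\}$ for each $a\in N(y)\setminus\{x\}$ (the same coalitions that give independence of $N(y)$), not from the single partner of $\{y\}$, which by itself only controls one vertex.
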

\begin{proof}
	Clearly, $PRC(K_2)=2$. Now  assume that $G \in \mathcal{B}$. Consider the singleton partition $\pi_1$ of $G$. Observe that for each vertex $v \in N(y)$, the set $\lbrace v\rbrace$ forms a $prc$-coalition with $\lbrace y\rbrace$, and for each vertex $v \in V(G) \setminus N(y) \cup \lbrace x,y\rbrace$, the set $\lbrace v\rbrace$ forms a $prc$-coalition with $\lbrace x\rbrace$. Hence, $\pi_1$ is a $prc$-partition of $G$, and so $PRC(G)=n$. Conversely, assume $PRC(G)=n$. Let $A=N(y)$, and $B=V(G) \setminus (A \cup \lbrace x,y\rbrace)$. If $A=\emptyset$ and $B=\emptyset$, then $G=K_2$, as desired. Now assume $G \neq K_2$. Since $G$ is connected, we have $A \neq \emptyset$. Note also that $B\neq \emptyset$, for otherwise, $G$ would be a star, and so $G$ would have no $PRC$-partition. Now  consider the $PRC$-partition $\pi_1$ of $G$.  Since $N(x)=\lbrace y\rbrace$, each set in $\pi_1 \setminus \lbrace \lbrace x\rbrace ,\lbrace y\rbrace\rbrace$ must form a $prc$-coalition with $\lbrace x\rbrace$ or $\lbrace y\rbrace$, to dominate $x$. 
	Now for each $v \in A$, the set $\lbrace v\rbrace$ cannot form a $prc$-coalition with $\lbrace x\rbrace$, since $y$ has more than one neighbors in $\lbrace v,x\rbrace$. Thus, $\lbrace v\rbrace$ forms a $prc$-coalition with $\lbrace y\rbrace$, implying that $[A,B]$ is full. Note also that $v$ has no neighbor in $A$, for otherwise, $\lbrace v,y\rbrace$ would not be a perfect dominating set. Therefore, $A$ is an independent set in $G$. Further, for each $u \in B$, the set $\lbrace u\rbrace$ must form a $prc$-coalition with $\lbrace x\rbrace$, since $\lbrace u,y\rbrace$ is not a perfect dominating set. Since $[\lbrace x\rbrace ,B]$ is empty, it follows that $B$ induces a clique in $G$. Hence, $G \in \mathcal{B}$.    
\end{proof}

\subsection{Triangle-free graphs $G$ with $PRC(G)=\lvert V(G)\rvert$}
In  this subsection, we characterize all triangle-free graphs  with maximum perfect coalition number. For this purpose,
we define the families of graphs $\mathcal{T}_1$ and $\mathcal{T}_2$  as follows.
\begin{definition} \label{family1}
	Let $\mathcal{T}_1$ represent the family of bipartite graphs $H- M$, where $H=K_{r,r}$ is a complete bipartite graph with $r \geq 2$, and $M$ is a perfect matching in $H$. 
\end{definition}
\begin{definition} \label{family2}
	Let $\mathcal{T}_2$ represent the family of bipartite graphs $H- M$, where $H=K_{r,s}$ is a complete bipartite graph with 
$r,s \geq 2$, and $M$ (possibly empty) is a  matching in $H$ such that $\lvert M\rvert < min \lbrace r,s\rbrace$.  
\end{definition}
Figure \ref{T1&T2} illustrates some examples of graphs in $\mathcal{T}_1 \cup \mathcal{T}_2$, where the vertices involved in $M$ are colored red.
 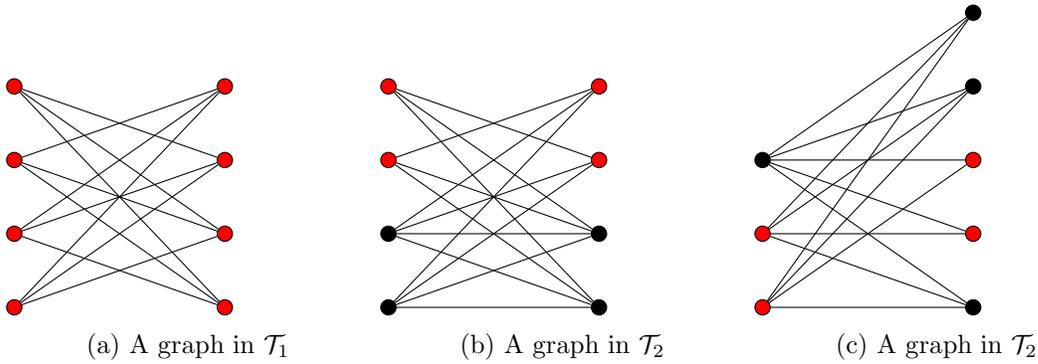
\begin{figure}[!htbp] 
	\centering
	\begin{subfigure}{0.3\textwidth}
		\begin{tikzpicture}[scale=0.14, transform shape]
			
			\node [draw, shape=circle,fill=red,scale=4] (v1) at  (0,0){};
			\node [draw, shape=circle,fill=red,scale=4] (v2) at  (0,7) {};
			\node [draw, shape=circle,fill=red,scale=4] (v3) at  (0,14) {};
			\node [draw, shape=circle,fill=red,scale=4] (v4) at  (0,21) {};
			\node [draw, shape=circle,fill=red,scale=4] (v5) at  (20,0) {};
			\node [draw, shape=circle,fill=red,scale=4] (v6) at  (20,7) {};
\node [draw, shape=circle,fill=red,scale=4] (v7) at  (20,14) {};
\node [draw, shape=circle,fill=red,scale=4] (v8) at  (20,21) {};			
			
			\draw(v1)--(v6);
			\draw(v1)--(v7);
			\draw(v1)--(v8);
			\draw(v2)--(v5);
			\draw(v2)--(v7);
					\draw(v2)--(v8);
			\draw(v3)--(v5);
			\draw(v3)--(v6);
			\draw(v3)--(v8);
			\draw(v4)--(v5);
					\draw(v4)--(v6);
			\draw(v4)--(v7);
		
		\end{tikzpicture}
		\caption{A graph in $\mathcal{T}_1$.}
		
	\end{subfigure}
	\begin{subfigure}{0.3\textwidth}
		\begin{tikzpicture}[scale=.14, transform shape]
			
			\node [draw, shape=circle,fill=black,scale=4] (v1) at  (0,0){};
			\node [draw, shape=circle,fill=black,scale=4] (v2) at  (0,7) {};
			\node [draw, shape=circle,fill=red,scale=4] (v3) at  (0,14) {};
			\node [draw, shape=circle,fill=red,scale=4] (v4) at  (0,21) {};
			\node [draw, shape=circle,fill=black,scale=4] (v5) at  (20,0) {};
			\node [draw, shape=circle,fill=black,scale=4] (v6) at  (20,7) {};
			\node [draw, shape=circle,fill=red,scale=4] (v7) at  (20,14) {};
			\node [draw, shape=circle,fill=red,scale=4] (v8) at  (20,21) {};			
			
			\draw(v1)--(v6);
			\draw(v1)--(v7);
			\draw(v1)--(v8);
			\draw(v2)--(v5);
			\draw(v2)--(v7);
			\draw(v2)--(v8);
			\draw(v3)--(v5);
			\draw(v3)--(v6);
			\draw(v3)--(v8);
			\draw(v4)--(v5);
			\draw(v4)--(v6);
			\draw(v4)--(v7);
			\draw(v1)--(v5);
\draw(v2)--(v6);			
		 				
		\end{tikzpicture}
		\caption{A graph in $\mathcal{T}_2$.}
	\end{subfigure}
	\begin{subfigure}{0.3\textwidth}
		\begin{tikzpicture}[scale=.14, transform shape]
			
			\node [draw, shape=circle,fill=red,scale=4] (v1) at  (0,0){};
			\node [draw, shape=circle,fill=red,scale=4] (v2) at  (0,7) {};
			\node [draw, shape=circle,fill=black,scale=4] (v3) at  (0,14) {};
	
			\node [draw, shape=circle,fill=black,scale=4] (v5) at  (20,0) {};
			\node [draw, shape=circle,fill=red,scale=4] (v6) at  (20,7) {};
			\node [draw, shape=circle,fill=red,scale=4] (v7) at  (20,14) {};
			\node [draw, shape=circle,fill=black,scale=4] (v8) at  (20,21) {};
						\node [draw, shape=circle,fill=black,scale=4] (v9) at  (20,28) {};						
			
			\draw(v1)--(v5);
			\draw(v1)--(v7);
			\draw(v1)--(v8);
			\draw(v1)--(v9);
			\draw(v2)--(v5);
			\draw(v2)--(v6);
			\draw(v2)--(v8);
			\draw(v2)--(v9);
			\draw(v3)--(v5);
			\draw(v3)--(v6);
			\draw(v3)--(v7);
			\draw(v3)--(v8);
			\draw(v3)--(v9);

		\end{tikzpicture}
		\caption{A graph in $\mathcal{T}_2$.}
	\end{subfigure}
	\caption{Some graphs in $\mathcal{T}_1 \cup \mathcal{T}_2$.}\label{T1&T2}
\end{figure}

\begin{theorem}
	Let $G$ be a triangle-free graph of order $n\geq 4$. Then $PRC(G)=n$ if  and only if $G \in \mathcal{T}_1 \cup \mathcal{T}_2$. 
\end{theorem}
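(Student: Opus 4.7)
The plan is to prove both directions separately; the forward implication does the heavy lifting.

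$(\Leftarrow)$ I will fix $G=K_{r,s}-M\in\mathcal{T}_1\cup\mathcal{T}_2$ with parts $A=\{a_1,\dots,a_r\}$ and $B=\{b_1,\dots,b_s\}$, and verify that the singleton partition $\pi_1$ is a $prc$-partition. For each matched edge $a_ib_i\in M$, the set $\{a_i,b_i\}$ is perfect dominating (since $M$ is a matching, every other $a_j$ has the unique neighbor $b_i$ in it, and symmetrically for $B$), making $\{a_i\}$ and $\{b_i\}$ perfect coalition partners. In $\mathcal{T}_2$ unmatched vertices exist in both parts (using $|M|<\min\{r,s\}$), and any pair of unmatched $a_i,b_j$ gives a perfect dominating $\{a_i,b_j\}$. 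Conditions (i) and (ii) of Definition~\ref{prc-def} are then automatic.

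$(\Rightarrow)$ Since $PRC(G)=n$, the singleton partition $\pi_1$ must be a $prc$-partition. If $G$ is disconnected, Observation~\ref{dis-n} together with triangle-freeness forces $G=2K_2\in\mathcal{T}_1$; so assume $G$ is connected. First I will rule out a universal vertex $v$: any other singleton $\{w\}\in\pi_1$ needs a partner $\{z\}\neq\{v\}$ (condition (i) forbids pairing with the dominating singleton $\{v\}$), but then $v$ is adjacent to both $w$ and $z$, contradicting the perfectness of $\{w,z\}$. Hence every $\{v\}$ has a partner $\{u_v\}$, and $\{v,u_v\}$ is a perfect dominating set of size $2$.

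The crucial structural step is to extract a bipartition of $G$ from any such 2-element PD set $\{v,u\}$. Triangle-freeness makes $N(v)$ and $N(u)$ independent, and in either case ($vu\in E$ or $vu\notin E$) the perfect domination condition combined with triangle-freeness forces $N(v)\cap N(u)=\emptyset$ and $V(G)\setminus\{v,u\}\subseteq N(v)\cup N(u)$. A direct check then shows that the two sets $\{v\}\cup(N(u)\setminus\{v\})$ and $\{u\}\cup(N(v)\setminus\{u\})$ form a bipartition of $G$. Calling these parts $A$ and $B$, I next argue that partners cross the bipartition: if some $v\in A$ had partner $u_v\in A$, then every $w\in A\setminus\{v,u_v\}$ has no neighbors in $\{v,u_v\}$ (both being in $A$), forcing $A=\{v,u_v\}$; pushing this further by examining the partners of $B$-vertices ultimately forces $G$ to be disconnected, a contradiction.

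Finally, for $v\in A$ with partner $u_v\in B$, the PD condition forces $v$ to be adjacent to every $b\in B\setminus\{u_v\}$ and $u_v$ to every $a\in A\setminus\{v\}$, so each vertex has at most one non-neighbor in the opposite part; hence the set of non-edges of $G$ in $K_{|A|,|B|}$ is a matching $M$, and $|A|,|B|\geq 2$ by the absence of a universal vertex. If $|M|<\min\{|A|,|B|\}$, then $G\in\mathcal{T}_2$; otherwise (WLOG $|A|\leq|B|$ with $M$ saturating $A$), I rule out $|A|<|B|$ by analyzing any unmatched $b\in B$: a partner $p\in A$ would have to be unmatched (contradicting saturation of $A$), whereas a partner $p\in B$ would force every $a\in A$ to satisfy $ap\in M$, impossible since $M$ is a matching and $|A|\geq 2$. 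Thus $|A|=|B|$ and $M$ is a perfect matching, giving $G\in\mathcal{T}_1$. The main obstacle I anticipate is the bipartiteness step, since it extracts global structure from the local ``exactly one neighbor'' constraint; once bipartiteness and partners-across-parts are in place, the matching structure and the split between $\mathcal{T}_1$ and $\mathcal{T}_2$ follow from routine bookkeeping.
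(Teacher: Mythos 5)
Your proof is correct, but it takes a genuinely different route from the paper's. The paper argues via girth: it shows $g(G)\leq 6$, excludes $g(G)=5$, shows $g(G)=6$ forces $G=C_6$, reduces the acyclic case to the tree characterization (Corollary \ref{tree-n}) and Observation \ref{dis-n}, and then spends most of the proof on an elaborate case analysis around a fixed $4$-cycle $xyzt$, splitting on whether $\{x\}$ has a partner on or off the cycle and on which of the neighborhood sets $A,B$ are empty. Your argument replaces all of that with one structural lemma: in a triangle-free graph, any $2$-element perfect dominating set $\{v,u\}$ forces $N(v)\cap N(u)=\emptyset$, $V\setminus\{v,u\}\subseteq N(v)\cup N(u)$, and independence of $\{v\}\cup(N(u)\setminus\{v\})$ and $\{u\}\cup(N(v)\setminus\{u\})$, so $G$ is bipartite with that bipartition; since partners in the singleton partition must then cross the parts (same-part partners force a two-vertex part and hence a disconnected union of stars), every vertex has at most one non-neighbor in the opposite part, which is exactly the $K_{r,s}-M$ structure, and the $\mathcal{T}_1$ versus $\mathcal{T}_2$ dichotomy falls out of a short argument about unmatched vertices. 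This is shorter, avoids the girth casework entirely, does not lean on the tree result, and makes transparent why complete-bipartite-minus-a-matching is the answer; the paper's proof is more local and computational but self-contained in the same way. Two small points to tighten: when ruling out $|A|=1$ you need connectivity (to exclude an isolated non-neighbor) in addition to the absence of a universal vertex, and in the $(\Leftarrow)$ direction you should state explicitly that a matched vertex takes its $M$-partner as its coalition partner while an unmatched vertex takes an unmatched vertex of the other part — which your sketch implies but does not quite say.
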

\begin{proof}
	Suppose that $G \in \mathcal{T}_1 \cup \mathcal{T}_2$. Let $K_1 =\lbrace v_1,v_2,\dots ,v_{\lvert K_1\rvert} \rbrace$ and $K_2=\lbrace u_1,u_2,\dots ,u_{\lvert K_2\rvert} \rbrace$ denote the partite sets of $G$. First assume $G \in \mathcal{T}_1$. Assume, without loss of generality, that $u_i v_i \notin E(G)$, for each $1\leq i\leq \frac{n}{2}$. Now observe that the singleton partition $\pi_1$ of $G$ is a $prc$-partition for $G$, where the sets $\lbrace u_i\rbrace$ and $\lbrace v_i \rbrace$ are $prc$-partners, for each $1 \leq i \leq \frac{n}{2}$. Thus, $PRC(G)=n$. Now assume $G \in \mathcal{T}_2$. Consider the singleton partition $\pi_1$ of $G$. Further, assume that the vertices $v_1 \in K_1$ and $u_1 \in K_2$ are  not saturated by $M$. Now for each $v_i \in K_1$, if deg$_{G}(v_i)=\lvert K_2\rvert$, then the set $\lbrace v_i\rbrace$ forms a $prc$-coalition with $\lbrace u_1\rbrace$, otherwise,  $\lbrace v_i\rbrace$ forms a $prc$-coalition with $\lbrace u_j\rbrace$, where $v_i u_j \notin E(G)$. By symmetry, we can show that each singleton subset of $K_2$ has also a $prc$-partner in $\pi_1$.  Hence, $\pi_1$  is a $prc$-partition for $G$, and so $PRC(G)=n$.  
	Conversely, Assume that $PRC(G)=n$. We proceed with proving  the following claims.
	
	\begin{unnumbered}{Claim 1.}
		$g(G) \leq 6$.
	\end{unnumbered}
	\proof
	Suppose to the contrary that, $g(G)\geq 7$. Let  $C$ be a cycle of $G$ with order $g(G)$. Since $C$ has no perfect dominating set of order $2$, it follows that  for any vertex $v\in V(C)$, the set  $\lbrace v\rbrace$ is not a $prc$-partner of any set $\lbrace u\rbrace \subset V(C)$, so it must be a $prc$-partner of a set   $\lbrace u\rbrace \subseteq V(G) \setminus V(C)$. Thus, $\lbrace u\rbrace$  dominates $V(C) \setminus N_c [v]$, implying that $G$ contains triangles, a contradiction.
	\\
	
	An argument similar to the one presented above shows that $g(G) \neq 5$. 
	
	\begin{unnumbered}{Claim 2.}
		If $g(G)=6$, then $G=C_6$.
	\end{unnumbered}
	\proof
	Suppose to the contrary that, $G \neq C_6$. Let $C$ be a cycle of order $6$ in $G$, and let $v \in V(G)\setminus V(C)$. Note that the set $\lbrace v\rbrace$ cannot form a $prc$-partition with any set $\lbrace u\rbrace \subset V(C)$, for otherwise, $\lbrace v\rbrace$ would dominate $V(C) \setminus N[u]$, and so triangles would be created. Hence, $\lbrace v\rbrace$ must form a $prc$-coalition with a set $\lbrace u\rbrace \subseteq V(G) \setminus V(C)$. Now since each vertex in $V(C)$ is dominated by $\lbrace u,v\rbrace$, we can find a vertex in $\lbrace u,v\rbrace$ having at least three neighbors in $V(C)$, which creates cycles of order $3$ or $4$, a contradiction. Hence, $G=C_6$, and so $G \in \mathcal{T}_1$. 
	\\

	Note also that if $G$ has no cycle, then it follows from Observation \ref{dis-n} and Corollary \ref{tree-n} that $G \in \mathcal{T}_1 \cup \mathcal{T}_2$. Hence, for the rest of the proof, we may assume that $g(G)=4$. Consider the singleton $PRC(G)$-partition $\pi_1$. Let $C$ be a $4$-cycle in $G$ with $V(C)=\lbrace x,y,z,t\rbrace$ and $E(C)=\lbrace xy,yz,zt,tx\rbrace$. We distinguish two cases.
	
	\textbf{Case 1.} The set $\lbrace x\rbrace$ forms a $prc$-coalition with a singleton subset of $V(C)$. \\
	Observe that $\lbrace x\rbrace$ cannot form a $prc$-coalition with  $\lbrace z\rbrace$, so we may assume, 
by symmetry, that the sets $\lbrace x\rbrace$ and $\lbrace y\rbrace$ are $prc$-partners.
	Define $A=N(x) \setminus \lbrace t,y\rbrace$ and $B=N(y)\setminus \lbrace x,z\rbrace$. Observe that $A \cap B =\emptyset$. Now we consider the following subcases.
	
	\textbf{Subcase 1.1.} $A=\emptyset$ and $B=\emptyset$. It follows that $G=C_4$, and so $G \in \mathcal{T}_2$.
	
	\textbf{Subcase 1.2.} $A \neq \emptyset$ and $B=\emptyset$. If $[\lbrace z\rbrace ,A]$ is full, then $G \in \mathcal{T}_2$, as desired. Hence, we may assume that $[\lbrace z\rbrace ,A]$ is not full. Let $v \in A$ such that $vz \notin E(G)$. Note that the set $\lbrace v\rbrace$ has no $prc$-partner in $\lbrace \lbrace x\rbrace, \lbrace y\rbrace ,\lbrace t\rbrace \rbrace$. Now if $A=\lbrace v\rbrace$, then $G \in \mathcal{T}_2$, as desired. Hence, we may assume $A \neq \lbrace v\rbrace$. Note that $[\lbrace z\rbrace ,\lbrace A \setminus \lbrace v\rbrace]$ is full, for otherwise $\lbrace v\rbrace$ would have no $prc$-partner. Hence, $G \in \mathcal{T}_2$.
	
	\textbf{Subcase 1.3.} $A = \emptyset$ and $B \neq \emptyset$. Similar to the previous subcase, we can show that  $G \in \mathcal{T}_2$.
	
	\textbf{Subcase 1.4.} $A \neq \emptyset$ and $B \neq \emptyset$.
	Consider arbitrary vertices $v \in A$ and $u \in B$. Observe that $\lbrace v\rbrace$ has no $prc$-partner in $\lbrace \lbrace t\rbrace,\lbrace y\rbrace \rbrace$. Now it is not hard to verify that either $\lvert [\lbrace v\rbrace ,\lbrace x,z\rbrace \cup B] \rvert =\lvert B\rvert +2$, or $\lvert [\lbrace v\rbrace ,\lbrace x,z\rbrace \cup B] \rvert =\lvert B\rvert +1$. Similarly, we can show that either $\lvert [\lbrace u\rbrace ,\lbrace y,t\rbrace \cup A] \rvert =\lvert A\rvert +2$, or $\lvert [\lbrace u\rbrace ,\lbrace y,t\rbrace \cup A] \rvert =\lvert A\rvert +1$. Consequently, $G$ is a bipartite graph with partite sets $A \cup \lbrace y,t\rbrace$ and $B \cup \lbrace x,z\rbrace$ satisfying all properties of Definition \ref{family2}. Hence,  $G \in \mathcal{T}_2$.
	
	\textbf{Case 2.} The set $\lbrace x\rbrace$ does not form a $prc$-coalition with any singleton subset of $V(C)$. \\
	Let $\lbrace e\rbrace$ be a $prc$-partner of $\lbrace x\rbrace$. Define $A=N(x) \setminus \lbrace y,t,e\rbrace$ and $B=N(e) \setminus\lbrace z,x\rbrace$. Observe that $ez \in E(G)$ and $A \cap B=\emptyset$. We consider the following subcases.
	
	\textbf{Subcase 2.1.} $A=\emptyset$ and $B=\emptyset$. It is clear that $G \in \mathcal{T}_2$, as desired.
	
	\textbf{Subcase 2.2.} $A=\emptyset$ and $B \neq \emptyset$. Note that the set $\lbrace y\rbrace$ (or $\lbrace t\rbrace$) must form a $prc$-coalition with either a member of $\lbrace \lbrace x\rbrace ,\lbrace z\rbrace \rbrace$, or a singleton subset of $B$, implying that either $[\lbrace y\rbrace , B]$ is full, or $\lvert [\lbrace y\rbrace ,B]\rvert =\lvert B\rvert -1$. Now if $xe \in E(G)$, then for any subset $\lbrace u\rbrace \subseteq B$, the set $\lbrace u\rbrace$ must have a $prc$-partner in $\lbrace \lbrace y\rbrace ,\lbrace t\rbrace ,\lbrace e\rbrace \rbrace$, implying that either $[\lbrace u\rbrace ,\lbrace y,t,e\rbrace]$ is full, or $\lvert [\lbrace u\rbrace ,\lbrace y,t,e\rbrace] \rvert =2$. Otherwise, for any subset $\lbrace u\rbrace \subseteq B$, the set $\lbrace u\rbrace$ must have a $prc$-partner in $\lbrace \lbrace y\rbrace ,\lbrace t\rbrace  \rbrace$, implying again that either $[\lbrace u\rbrace ,\lbrace y,t,e\rbrace]$ is full, or $\lvert [\lbrace u\rbrace ,\lbrace y,t,e\rbrace] \rvert =2$. Hence, $G$ is  a bipartite graph with partite sets $\lbrace y,t,e\rbrace$ and $B \cup \lbrace x,z\rbrace$, satisfying all properties of Definition $\ref{family2}$, and so $G \in \mathcal{T}_2$.
	
	\textbf{Subcase 2.3.} $A \neq \emptyset$ and $B = \emptyset$. Note that for any subset $\lbrace v\rbrace \subseteq A$, the set $\lbrace v\rbrace$ must have a $prc$-partner in $\lbrace \lbrace x\rbrace ,\lbrace z\rbrace \rbrace$, implying that either $[\lbrace v\rbrace ,\lbrace x,z\rbrace]$ is full, or $\lvert [\lbrace v\rbrace ,\lbrace x,z\rbrace] \rvert =1$.  Now if the set $\lbrace z\rbrace$ has a $prc$-partner in $\lbrace \lbrace y\rbrace ,\lbrace t\rbrace ,\lbrace e\rbrace \rbrace$, then $[\lbrace z\rbrace , A]$ is full. Otherwise, $\lbrace z\rbrace$ must form a $prc$-coalition with a singleton subset in $A$, implying that either $[\lbrace z\rbrace ,A]$ is full, or $\lvert [\lbrace z\rbrace ,A]\rvert =\lvert A\rvert -1$. Note also that the set $\lbrace y\rbrace$ must have a $prc$-partner in $\lbrace \lbrace x\rbrace ,\lbrace z\rbrace \rbrace$. Thus, if $\lbrace z\rbrace$ has no $prc$-partner in $\lbrace \lbrace y\rbrace ,\lbrace t\rbrace ,\lbrace e\rbrace \rbrace$, then $\lbrace y\rbrace$ must form a $prc$-coalition with $\lbrace x\rbrace$, implying that $xe \in E(G)$, and so $[\lbrace x\rbrace ,\lbrace y,t,e\rbrace \cup A]$ is full. Consequently, $G$ is   a bipartite graph with partite sets $\lbrace x,z\rbrace$ and $A \cup \lbrace y,t,e\rbrace$, satisfying all properties of Definition $\ref{family2}$, and so $G \in \mathcal{T}_2$.
	
	\textbf{Subcase 2.4.} $A \neq \emptyset$ and $B \neq \emptyset$. Note that for any subset $\lbrace u\rbrace \subset B \cup \lbrace  z\rbrace$, the set $\lbrace u\rbrace$ must form a $prc$-partner with either a member of $\lbrace \lbrace y\rbrace ,\lbrace t\rbrace ,\lbrace e\rbrace \rbrace$, or a singleton subset of $A$, implying that either $[\lbrace u\rbrace ,A \cup \lbrace y,t,e\rbrace]$ is full, or $\lvert [\lbrace u\rbrace ,A \cup \lbrace y,t,e\rbrace]\rvert =\lvert A\rvert +2$. Further, for any subset $\lbrace v\rbrace \subset A \cup \lbrace  y,t\rbrace$, the set $\lbrace v\rbrace$ must form a $prc$-partner with either a member of $\lbrace \lbrace x\rbrace ,\lbrace z\rbrace  \rbrace$, or a singleton subset of $B$, implying that either $[\lbrace v\rbrace ,B \cup \lbrace x,z\rbrace]$ is full, or $\lvert [\lbrace v\rbrace ,B \cup \lbrace x,z\rbrace]\rvert =\lvert B\rvert +1$. Hence, $G$ is a complete bipartite graph with partite sets $K_1= A \cup \lbrace y,t,e\rbrace$ and $K_2 =B \cup \lbrace x,z\rbrace$ such that each vertex in $K_1$ has degree either $\lvert K_2\rvert$ or $\lvert K_2\rvert -1$, and each vertex in $K_2$ has degree either $\lvert K_1\rvert$ or $\lvert K_1\rvert -1$. To complete the proof, we show that if $K_1$ contains a vertex of degree $\lvert K_2\rvert$, then so does $K_2$, and vice versa. Assume first that $K_1$ contains a vertex $v$ with deg$(v)=\lvert K_2\rvert$.  If $v=e$, then the desired result follows. Hence, we may assume $v \neq e$, implying that $xe \notin E(G)$. Now  the set $\lbrace v\rbrace$ must form a $prc$-coalition with $\lbrace z\rbrace$ or a singleton subset $\lbrace u\rbrace \subseteq B$, implying, respectively, that $[\lbrace z\rbrace ,A]$ is full, or $[\lbrace u\rbrace ,A \cup \lbrace y,t\rbrace]$ is full, as desired. Conversely, assume $K_2$ contains a vertex $v$ with deg$(v)=\lvert K_1\rvert$. If $v=x$, then the desired result follows. Hence, we may assume $v \neq x$, implying that $xe \notin E(G)$. Now the set $\lbrace v\rbrace$ must form a $prc$-coalition with  a member of $\lbrace \lbrace y\rbrace ,\lbrace t\rbrace \rbrace$, say $\lbrace y\rbrace$, or a singleton subset $\lbrace u\rbrace \subseteq A$, implying, respectively, that $[\lbrace y\rbrace, B]$ is full, or $[\lbrace u\rbrace ,B \cup \lbrace z\rbrace]$ is full, as desired. Hence, $G \in \mathcal{T}_1 \cup \mathcal{T}_2$. This completes the proof.
\end{proof}
\subsection{Trees with large perfect coalition number}
In this subsection, we investigate trees with large perfect coalition number. As an immediate result from Theorem \ref{thm-delta1}, we have the following.  
\begin{corollary} \label{tree-n}
	Let $T$ be a tree of order $n$. Then $PRC(T)=n$ if and only if $T \in \lbrace P_1,P_2,P_4\rbrace$.
\end{corollary}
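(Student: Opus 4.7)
The plan is to derive this corollary directly from Theorem \ref{thm-delta1} by imposing acyclicity on the family $\mathcal{B}$. I would first handle the small cases separately: $P_1$ admits the singleton dominating partition, so $PRC(P_1) = 1$, and $P_2 = K_2$ is explicitly the first option of Theorem \ref{thm-delta1}, giving $PRC(P_2) = 2$. For every tree $T$ of order $n \geq 3$, one has $\delta(T) = 1$ and $T \neq K_2$, so Theorem \ref{thm-delta1} reduces the question to characterizing the trees in $\mathcal{B}$.

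Let $T \in \mathcal{B}$ with a chosen leaf $x$, its support $y$, and write $A = N(y) \setminus \{x\}$ and $B = V(T) \setminus (N(y) \cup \{x, y\})$. Connectedness together with $n \geq 3$ implies $\deg(y) \geq 2$, hence $A \neq \emptyset$. The first step of the structural analysis is to show $|B| \leq 1$: by property (ii) the set $B$ induces a clique, and a tree is triangle-free, so two vertices of $B$ together with any $a \in A$ (to which both are adjacent by property (iii)) would form a triangle. The next step is to eliminate the case $B = \emptyset$: here $V(T) = N[y]$ and $N(y)$ is independent, so $T$ would be the star $K_{1, n-1}$, which, as noted inside the proof of Theorem \ref{thm-delta1}, cannot realize $PRC(T) = n$ once $n \geq 3$.

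With $|B| = 1$, say $B = \{b\}$, a parallel argument finishes the case analysis. If $A$ contained two distinct vertices $a_1, a_2$, then property (iii) would give the $4$-cycle $y\, a_1\, b\, a_2\, y$, again contradicting acyclicity. Hence $|A| = 1$, say $A = \{a\}$, and property (iii) supplies the edge $ab$. Combined with the edges $xy$ and $ya$ forced by the setup, this yields $V(T) = \{x, y, a, b\}$ with edge set $\{xy, ya, ab\}$, so $T = P_4$.

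The converse is a quick verification: on $P_4 = v_1 v_2 v_3 v_4$ take $x = v_1$, $y = v_2$, so $A = \{v_3\}$ and $B = \{v_4\}$, and all three defining properties of $\mathcal{B}$ hold trivially. The only mildly delicate point is ruling out the star case $B = \emptyset$, which I defer to the argument already present in the proof of Theorem \ref{thm-delta1}; beyond that, the entire proof rests on two standard tree exclusions (no triangle, no $4$-cycle).
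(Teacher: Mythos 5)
Your proof is correct and follows the same route the paper intends: the paper states this corollary as an immediate consequence of Theorem \ref{thm-delta1} without writing out the details, and your argument simply makes explicit why the only acyclic members of $\mathcal{B}$ reduce to $P_4$ (via the triangle and $4$-cycle exclusions) while correctly discarding the star case $B=\emptyset$ by the observation already made inside the proof of Theorem \ref{thm-delta1}. Your reading of property (iii) as $[N(y)\setminus\{x\},B]$ being full is the intended one (the literal $[N(y),B]$ would be impossible for a leaf $x$), so no gap remains.
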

We define the tree $\mathcal{R}$ as shown below.
\begin{figure}[!htbp]
	\centering
	\begin{tikzpicture}[scale=.4, transform shape]
		\node [draw, shape=circle,fill=black] (v1) at  (0,0) {};
		\node [draw, shape=circle,fill=black] (v2) at  (0,2) {};
		\node [draw, shape=circle,fill=black] (v3) at  (-1,-2) {};

		\node [draw, shape=circle,fill=black] (v4) at  (1,-2) {};
		\node [draw, shape=circle,fill=black] (v5) at  (-2,-4) {};
		\node [draw, shape=circle,fill=black] (v6) at  (2,-4) {};
		
		\node [scale=3] at (0,2.8) {$v_1$};
		\node [scale=3] at (-0.8,0) {$v_2$};
		\node [scale=3] at (-1.8,-2) {$v_3$};
		\node [scale=3] at (1.8,-2) {$v_4$};
		\node [scale=3] at (-2.8,-4) {$v_5$};
		\node [scale=3] at (2.8,-4) {$v_6$};

		\draw (v1)--(v2);

		\draw (v1)--(v3);
		
		\draw(v1)--(v4);
		\draw(v3)--(v5);
		\draw(v4)--(v6);			
		
	\end{tikzpicture}
	\caption{The tree $\mathcal{R}$.}
\end{figure}
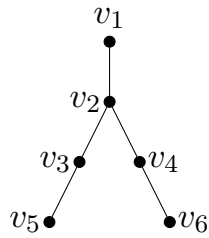
\begin{theorem}
	If $T \neq P_4$ is a tree of order $n>2$, then $PRC(T) \leq n-2$ with equality if and only if $T \in \lbrace P_5, P_6,P_7,\mathcal{R}\rbrace$. 
\end{theorem}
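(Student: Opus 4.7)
The plan is first to obtain the upper bound $PRC(T)\le n-2$ by using Corollary \ref{tree-n} (to rule out $PRC(T)=n$ under our hypotheses) and then proving separately that $PRC(T)=n-1$ cannot occur, and second to establish the equality characterization through direct construction for the four trees and a case analysis for the converse.

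For the upper bound, suppose toward a contradiction that $\pi$ is a $PRC(T)$-partition of size $n-1$; then $\pi$ consists of a single doubleton $D=\{x,y\}$ and $n-2$ singletons. A structural lemma I would establish first is that in any tree of order $\geq 3$, a $2$-vertex perfect dominating set $\{u,v\}$ forces $uv\in E(T)$ and $T$ to be a double star centered at $u$ and $v$: any vertex on a $u$--$v$ path of length $\geq 2$ has either $0$ or $2$ neighbors in $\{u,v\}$, and once $uv$ is an edge, deleting $uv$ leaves two star components centered at $u$ and $v$. Now split on whether any two singletons of $\pi$ are $prc$-partners. If so, their union is a $2$-vertex perfect dominating set, forcing $T$ to be a double star, and a direct case check shows that no double star of order $n>2$ other than $P_4$ admits a $prc$-partition of size $n-1$. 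Otherwise every singleton partners with $D$, so $D\cup\{v\}$ is a perfect dominating set for every $v\notin D$; condition (ii) of Definition \ref{prc-def} forces $N(x)\setminus D$ and $N(y)\setminus D$ to be disjoint, and combining this with the tree structure and perfect domination of each $D\cup\{v\}$ yields a contradiction.

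For equality, sufficiency is immediate: $PRC(P_5)=3$, $PRC(P_6)=4$, and $PRC(P_7)=5$ follow from Theorem \ref{prc-path}, and for the tree $\mathcal{R}$ of order $6$ the partition $\pi=\{\{v_1,v_5\},\{v_2,v_3\},\{v_4\},\{v_6\}\}$ of size $4=n-2$ is a $prc$-partition via the coalitions $(\{v_4\},\{v_2,v_3\})$ with union $\{v_2,v_3,v_4\}$ and $(\{v_6\},\{v_1,v_5\})$ with union $\{v_1,v_5,v_6\}$, both of which are perfect dominating sets with neither partner dominating. For necessity, suppose $PRC(T)=n-2$; then the partition has either two doubletons and $n-4$ singletons, or one tripleton and $n-3$ singletons. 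In each case I would enumerate the possible $prc$-partner choices for each set, using the $2$- and $3$-vertex perfect dominating set structure in trees together with the degree bound of Theorem \ref{bound-delta}. The main obstacle is the breadth of this enumeration; I would first derive a diameter bound on $T$ (using that every coalition union is a perfect dominating set of size $\leq 4$, which limits how far apart vertices of $T$ can lie), and then check the finitely many surviving trees to arrive at the list $\{P_5,P_6,P_7,\mathcal{R}\}$.
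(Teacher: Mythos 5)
Your outline follows the same global skeleton as the paper (rule out $n$ via Corollary \ref{tree-n}, rule out $n-1$ separately, then analyse the two possible shapes of an $(n-2)$-partition), but it rests on a structural lemma that is false. You claim that a $2$-vertex perfect dominating set $\{u,v\}$ in a tree of order at least $3$ forces $uv\in E(T)$ and $T$ to be a double star, arguing that any internal vertex of a $u$--$v$ path of length at least $2$ has $0$ or $2$ neighbours in $\{u,v\}$. That is only true for paths of length exactly $2$: if $u$ and $v$ are at distance at least $3$, the internal vertex adjacent to $u$ has exactly one neighbour in $\{u,v\}$, which is perfectly compatible with perfect domination. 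Concretely, $\{v_2,v_5\}$ is a $2$-vertex perfect dominating set of $P_5$ with $v_2v_5\notin E$, and $\{v_2\},\{v_5\}$ do form a perfect coalition there. The correct structure is: $T$ is either a double star or two stars centred at $u$ and $v$ joined by a single edge between a leaf of one and a leaf of the other. Since your first branch of the $n-1$ exclusion (``two singletons are $prc$-partners'') reduces everything to double stars, it silently omits this second family, and the same defect propagates into the necessity argument, which you say will use ``the $2$- and $3$-vertex perfect dominating set structure in trees'' (the $3$-vertex analogue is not even stated and would be more delicate still).

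Two further soft spots: in the ``otherwise'' branch of the $n-1$ case and throughout the equality characterization, the decisive contradictions are asserted rather than derived, whereas the paper's proof gets them by fixing a partner $\{x\}$ of the non-singleton set, splitting $V(T)$ into the private domains $A$ and $B$ dominated by each side, and exhibiting a specific singleton in $A$ with no admissible partner --- a mechanism your sketch does not supply. Also, your plan to finish by ``checking the finitely many surviving trees'' after a diameter bound does not work as stated: trees of bounded diameter (e.g.\ stars and double stars) form infinite families, so a diameter bound alone does not reduce the problem to a finite check; you would additionally need to bound the order or the degrees. Your sufficiency computations for $P_5,P_6,P_7$ and your explicit partition of $\mathcal{R}$ are correct, but the upper bound and the necessity direction as proposed have genuine gaps.
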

\begin{proof}
	By Corollary \ref{tree-n}, $PRC(T) \neq n$. Now we prove the following claim. 
		\begin{unnumbered}{Claim 1.}
	$PRC(T) \neq n-1$.	 
	\end{unnumbered}
	\proof
	
	 Suppose, to the contrary, that $PRC(T)=n-1$. Since stars are the only trees with a full vertex, and stars have no $prc$-partition, it follows that $T$ has no full vertex. Let $\pi$ be a $PRC(T)$-partition. Note that $\pi$ consists of a doubleton set, say $\lbrace u,v\rbrace$, and $n-2$ singleton sets. Let the set $\lbrace x\rbrace$ be a $prc$-partner of $\lbrace u,v\rbrace$. Further, let $A$ and $B$ denote the set of vertices in $V(T) \setminus \lbrace x,u,v\rbrace$ that are dominated by $\lbrace x\rbrace$ and $\lbrace u,v\rbrace$, respectively. Since $\lbrace x,u,v\rbrace$ is a perfect dominating set of $T$, we have $A \cap B =\emptyset$. Also, since $\lbrace u,v\rbrace$ is not a dominating set, it follows that $A \neq \emptyset$. Now we show  that $B \neq \emptyset$. Suppose that the converse is true. Since $T$ is connected, $x$ has a neighbor in $\lbrace u,v\rbrace$. If $uv \in E(T)$, then it is easy to check that for any set $\lbrace a\rbrace \subseteq A$, $\lbrace a\rbrace$ has no $prc$-partner. Hence, we may assume that $uv \notin E(T)$. Now it follows from connectedness of $T$ that $[\lbrace x\rbrace ,\lbrace u,v\rbrace]$ is full, implying that $\lbrace x\rbrace$ is a full vertex, a contradiction. Hence, $B \neq \emptyset$. Let $\lbrace a\rbrace$ be an arbitrary singleton subset of $A$. Now we consider the following cases.
	
	\textbf{Case 1.}  $uv \in E(T)$ and $[\lbrace x\rbrace ,\lbrace u,v\rbrace] \neq \emptyset$.
	
	We may assume, by symmetry, that $xv \in E(T)$.  Since $u$ is not dominated by $\lbrace a\rbrace \cup \lbrace x\rbrace$, the sets $\lbrace a\rbrace$ and $\lbrace x\rbrace$ are not $prc$-partners. Further, since $x$ has two neighbors in $\lbrace a\rbrace \cup \lbrace u,v\rbrace$, the sets $\lbrace a\rbrace$ and $\lbrace u,v\rbrace$ are not $prc$-partners. Note also that for any singleton subset $\lbrace b\rbrace \subseteq B$, $\lbrace a\rbrace$ and $\lbrace b\rbrace$ are not $prc$-partners, for otherwise, a cycle or cycles would be created. Hence, $\lbrace a\rbrace$ has no $prc$-partner, a contradiction.
	
	\textbf{Case 2.}  $uv \notin E(T)$ and $[\lbrace x\rbrace ,\lbrace u,v\rbrace] \neq \emptyset$.    We divide this case into two subcases.
	
	\textbf{Subcase 2.1.} $\lvert [\lbrace x\rbrace ,\lbrace u,v\rbrace] \rvert=1$.
	
	We may assume, by symmetry, that $xv \in E(T)$. 
	Since $u$ is not dominated by $\lbrace a\rbrace \cup \lbrace x\rbrace$, the sets $\lbrace a\rbrace$ and $\lbrace x\rbrace$ are not $prc$-partners. Further, since $x$ has more than one neighbors in $\lbrace a\rbrace \cup \lbrace u,v\rbrace$, the sets $\lbrace a\rbrace$ and $\lbrace u,v\rbrace$ are not $prc$-partners. Note also that no singleton subset of $B$ dominates $\lbrace u,v\rbrace$ implying that no singleton subset of $B$ can form a perfect coalition with $\lbrace a\rbrace$. Hence, $\lbrace a\rbrace$ has no $prc$-partner, a contradiction.
	
	\textbf{Subcase 2.2.}  $\lvert [\lbrace x\rbrace ,\lbrace u,v\rbrace] \rvert=2$.
	
	Since $x$ has more than one neighbors in $\lbrace a\rbrace \cup \lbrace u,v\rbrace$, the sets $\lbrace a\rbrace$ and $\lbrace u,v\rbrace$ are not $prc$-partners. Further, no singleton subset of $B$ dominates $\lbrace u,v\rbrace$ implying that no singleton subset of $B$ can form a perfect coalition with $\lbrace a\rbrace$. Not also that the sets $\lbrace a\rbrace$ and $\lbrace x\rbrace$ are not $prc$-partners, for otherwise, a cycle or cycles would be created.  Hence, $\lbrace a\rbrace$ has no $prc$-partner, a contradiction.
	
	\textbf{Case 3.} $[\lbrace x\rbrace ,\lbrace u,v\rbrace]$ is empty.
	
	Since $T$ is connected, $[A,B]\neq \emptyset$. Let $ab \in E(T)$, where $b \in B$.  Since $u$ is not dominated by $\lbrace a\rbrace \cup \lbrace x\rbrace$, the sets $\lbrace a\rbrace$ and $\lbrace x\rbrace$ are not $prc$-partners. Further, since $b$ has more than one neighbors in $\lbrace a\rbrace \cup \lbrace u,v\rbrace$, the sets $\lbrace a\rbrace$ and $\lbrace u,v\rbrace$ are not $prc$-partners. Note also that no singleton subset of $B$ dominates $\lbrace u,v\rbrace$ implying that no singleton subset of $B$ can form a perfect coalition with $\lbrace a\rbrace$. Hence, $\lbrace a\rbrace$ has no $prc$-partner, a contradiction. Hence, $PRC(T) \neq n-1$, and so $PRC(T) \leq n-2$. \\
	
	Now we prove the second part of the theorem. By Theorem \ref{prc-path}, $PRC(P_5)=3$, $PRC(P_6)=4$ and $PRC(P_7)=5$. Also, the partition $\lbrace \lbrace v_1\rbrace ,\lbrace v_2\rbrace ,\lbrace v_3,v_4\rbrace ,\lbrace v_5,v_6\rbrace \rbrace $ is a $prc$-partition for $\mathcal{R}$, so $PRC(\mathcal{R})=4$.
	Conversely, assume that $PRC(T)=n-2$. Let $\pi$ be a $PRC(T)$-partition. Consider two cases. 
	
	\textbf{Case 1.} $\pi$ consists of a tripleton set  $\lbrace u,v,t\rbrace$, and $n-3$ singleton sets.
	
	We show that this case is impossible. Suppose that the converse is true. Let the set $\lbrace x\rbrace$ be a $prc$-partner of $\lbrace u,v,t\rbrace$. Further, let $A$ and $B$ denote the set of vertices in $V(T) \setminus \lbrace x,u,v,t\rbrace$ that are dominated by $\lbrace x\rbrace$ and $\lbrace u,v,t\rbrace$, respectively. Since $\lbrace x,u,v,t\rbrace$ is a perfect dominating set of $T$, we have $A \cap B =\emptyset$. Since $\lbrace u,v,t\rbrace$ is not a dominating set, it follows that $A \neq \emptyset$. Now we show that $B \neq \emptyset$. Suppose that the converse is true. Since $T$ is connected, $x$ has a neighbor in $\lbrace u,v,t\rbrace$, implying that for any set $\lbrace a\rbrace \subseteq A$, $\lbrace a\rbrace$ cannot form a $prc$-coalition with $\lbrace u,v,t\rbrace$. Thus, $\lbrace a\rbrace$ must form a $prc$-coalition with $\lbrace x\rbrace$, implying that $[\lbrace x\rbrace ,\lbrace u,v,t\rbrace]$ is full. Thus, $x$ is a full vertex, which is a contradiction. Hence, $B \neq \emptyset$. Let $\lbrace a\rbrace$ and $\lbrace b\rbrace$ be arbitrary singleton subsets of $A$ and $B$, respectively. Since $b$ has exactly one neighbor in $\lbrace u,v,t\rbrace$, and $a$ has no neighbor in $\lbrace u,v,t\rbrace$, it follows that $\lbrace a\rbrace$ cannot be a $prc$-partner of $\lbrace b\rbrace$. Now we show that the sets $\lbrace a\rbrace$ and $\lbrace x\rbrace$ are not $prc$-partners. Suppose that the converse is true. It follows that $[\lbrace x\rbrace ,\lbrace u,v,t\rbrace]$ is full, and that $ab \in E(T)$. By symmetry, we may assume that $bu \in E(T)$. Now observe that the set $\lbrace x,u,b,a\rbrace$ induces a cycle, a contradiction. It remains to show that  $\lbrace a\rbrace$ cannot form a $prc$-partition with $\lbrace u,v,t\rbrace$. If $x$ has a neighbor in $\lbrace u,v,t\rbrace$, then $x$ has more than one neighbors in $\lbrace a\rbrace \cup \lbrace u,v,t\rbrace$, implying that the sets $\lbrace a\rbrace$ and $\lbrace u,v,t\rbrace$ are not $prc$-partners. Hence, we may assume that $[\lbrace x\rbrace ,\lbrace u,v,t\rbrace] =\emptyset$. Since $T$ is connected, it follows that $a$ has a neighbor in $B$. Let $ab \in E(T)$. Now we observe that $b$ has more that one neighbors in $\lbrace a\rbrace \cup \lbrace u,v,t\rbrace$. Hence, $\lbrace a\rbrace$ cannot form a $prc$-partition with $\lbrace u,v,t\rbrace$, and so this case is impossible.
	
	\textbf{Case 2.} $\pi$ consists of two doubleton sets, say $\lbrace u,v\rbrace$ and $\lbrace t,z\rbrace$, and $n-4$ singleton sets.
	
	We divide this case into two subcases.
	
	\textbf{Subcase 2.1.} $\lbrace u,v\rbrace$ and $\lbrace t,z\rbrace$ are $prc$-partners.
	
	Let $A$ and $B$ denote the set of vertices in $V(T) \setminus \lbrace u,v,t,z\rbrace$ that are dominated by $\lbrace u,v\rbrace$ and $\lbrace t,z\rbrace$, respectively. Since $\lbrace u,v,t,z\rbrace$ is a perfect dominating set of $T$, we have $A \cap B =\emptyset$. Observe that the sets $A$ and $B$ cannot both be empty sets.
	
	\begin{unnumbered}{Claim 2.}
		If $A =\emptyset$, then $T=P_5$. 
	\end{unnumbered}
	\proof
	Since $\lbrace t,z\rbrace$ is not a dominating set, there exists a vertex in $\lbrace u,v\rbrace$, say $u$, that has no neighbor in $\lbrace t,z\rbrace$. Now since $T$ is connected, it follows that $uv \in E(T)$, and that $v$ has a neighbor in $\lbrace t,z\rbrace$. Assume, by symmetry that, $vt \in E(T).$ Let $\lbrace b\rbrace$ be an arbitrary singleton subset of $B$. Since $\lbrace b\rbrace $ is not a $prc$-partner of $\lbrace t,z\rbrace$, it must form a $prc$-coalition with $\lbrace u,v\rbrace$, implying that $tb \notin E(T)$, and that $[\lbrace z\rbrace ,\lbrace u,v\rbrace]=\emptyset$.  Choosing $b$ arbitrarily, we deduce that $t$ has no neighbor in $B$, implying that $[\lbrace z\rbrace ,B]$ is full. Now, for $T$ to be connected, we must have $tz \in E(T)$. Note also that $B=\lbrace b\rbrace$, for otherwise $\lbrace b\rbrace$ would have no $prc$-partner. Hence, $T=P_5$.
	
	By symmetry, if $B =\emptyset$, then $T=P_5$. 
	For the rest of this subsection, we may assume that $A \neq \emptyset$ and $B \neq \emptyset$. 
	Let $\lbrace a\rbrace$ and $\lbrace b\rbrace$ be arbitrary singleton subsets of $A$ and $B$, respectively. Note that $[\lbrace u,v\rbrace ,\lbrace t,z\rbrace] \neq \emptyset$, for otherwise, it is easy to verify that the set $\lbrace a\rbrace$ would not have a $prc$-partner.\\
	
	\begin{unnumbered}{Claim 3.}
		$[\lbrace t\rbrace ,\lbrace u,v\rbrace]$ is not full.
	\end{unnumbered}
	\proof
	Suppose that the converse is true. Assume, by symmetry, $au \in E(T)$. Observe that no singleton subset of $A$ can form a $prc$-coalition with $\lbrace a\rbrace$. Since $u$ has more than one neighbors in $\lbrace a\rbrace \cup \lbrace t,z\rbrace$, the sets $\lbrace a\rbrace$ and $\lbrace t,z\rbrace$ are not $prc$-partners. Further, since $t$ has more than one neighbors in $\lbrace u,v\rbrace$, the sets $\lbrace a\rbrace$ and $\lbrace u,v\rbrace$ are not $prc$-partners, and since $\lbrace a\rbrace$ has exactly one neighbor in $\lbrace u,v\rbrace$, no singleton subset of $B$ can form a $prc$-coalition with $\lbrace a\rbrace$. Hence, $\lbrace a\rbrace$ has no $prc$-partner, a contradiction.
	\\
	
	Therefore, by symmetry, none of the sets $[\lbrace z\rbrace ,\lbrace u,v\rbrace]$, $[\lbrace u\rbrace ,\lbrace t,z\rbrace]$ and $[\lbrace v\rbrace ,\lbrace t,z\rbrace]$ are full.
	\begin{unnumbered}{Claim 4.}
		If $\lvert [\lbrace u,v\rbrace ,\lbrace t,z\rbrace] \rvert = 1$, then $T=P_6$. 
	\end{unnumbered}
	\proof

	Let by symmetry that, $[\lbrace u,v\rbrace ,\lbrace t,z\rbrace]= \lbrace vt\rbrace$. 
Observe that $A=\lbrace a\rbrace$ and $B=\lbrace b\rbrace$.
So, it is not hard to verify that the set $\lbrace a\rbrace$ can only form a $prc$-partition with $\lbrace t,z\rbrace$, so $[\lbrace a\rbrace ,\lbrace u,v\rbrace]=\lbrace au\rbrace$. By symmetry, $[\lbrace b\rbrace ,\lbrace t,z\rbrace]= \lbrace bz\rbrace$.  Further, note that $ab \notin E(T)$, for otherwise, $b$ would have more than one neighbors in $\lbrace a\rbrace \cup \lbrace t,z\rbrace$, contradicting the fact that $\lbrace a\rbrace$ and $\lbrace t,z\rbrace$ are $prc$-partners. Thus, for $T$ to be connected, we must have $\lbrace uv,tz\rbrace  \subset E(T)$, and so $T=P_6$. \\
	
	Finally, we may assume by symmetry that,  $ [\lbrace u,v\rbrace ,\lbrace t,z\rbrace] = \lbrace vt,uz\rbrace$. Assume by symmetry that, $av \in E(T)$. Since $v$ has more that one neighbors in 
$\lbrace t,z\rbrace \cup \lbrace a\rbrace$, the sets $\lbrace t,z\rbrace$ and $\lbrace a\rbrace$ are not 
$prc$-partners. Further, Observe that $\lbrace a\rbrace$ cannot form a $prc$-coalition with  
any singleton subset of $B$. Hence, $\lbrace a\rbrace$ must form a $prc$-coalition with $\lbrace u,v\rbrace$, implying that  $ab \in E(T)$. Thus, $bt \notin E(T)$, and so $bz \in E(T)$. Since $\lvert A\rvert =\lvert B\rvert =1$,  $T=P_6$. \\
	
	\textbf{Subcase 2.2.} $\lbrace u,v\rbrace$ and $\lbrace t,z\rbrace$ are not $prc$-partners.		
	
	Let $\lbrace w\rbrace$ be a $prc$-partner of $\lbrace u,v\rbrace$. 	
	Let $A$ and $B$ denote the set of vertices in $V(T) \setminus \lbrace u,v,w\rbrace$ that are dominated by $\lbrace w\rbrace$ and $\lbrace u,v\rbrace$, respectively. Since $\lbrace u,v,w\rbrace$ is a perfect dominating set of $T$, we have $A \cap B =\emptyset$. Also, since the set  $\lbrace u,v\rbrace$  is not  dominating sets, it follows that $A \neq \emptyset$. Now we show that $B \neq \emptyset$. Suppose, to the contrary, that $B=\emptyset$. Let $\lbrace a\rbrace \subseteq A$. Since $\lbrace w\rbrace$ is not a dominating set, $[\lbrace w\rbrace ,\lbrace u,v\rbrace]$ is not full. Now since $T$ is connected, it follows that $uv \in E(T)$ and that $w$ has a neighbor in $\lbrace u,v\rbrace$. Assume, by symmetry, $vw \in E(T)$. Now observe that the set $\lbrace a\rbrace$ has no $prc$-partner, a contradiction.

	\begin{unnumbered}{Claim 5.}
		$\lbrace t,z\rbrace \nsubseteq A$.
	\end{unnumbered}
	\proof
	Suppose, to the contrary, that  $\lbrace t,z\rbrace \subseteq A$. Observe that $\lbrace t,z\rbrace$ cannot form a $prc$-partition with any singleton subset of $A$. Also by our assumption, the sets $\lbrace u,v\rbrace$ and $\lbrace t,z\rbrace$ are not $prc$-partners. Further, for any arbitrary singleton subset  $\lbrace b\rbrace \subseteq B$, since $b$ has one neighbor in $\lbrace u,v\rbrace$, it follows that the sets $\lbrace b\rbrace$ and $\lbrace t,z\rbrace$ are not $prc$-partners.  Note also that $\lbrace t,z\rbrace$ does not form a $prc$-partition with $\lbrace w\rbrace$, for otherwise, a cycle or cycles would be created. Hence,  the set $\lbrace t,z\rbrace$ has no $prc$-partner, a contradiction. \\ 
	
	\begin{unnumbered}{Claim 6.}
		$\lvert A\rvert \leq 2$.
	\end{unnumbered}
	\proof
	Suppose, to the contrary, that $\lvert A\rvert \geq 3$. Let $\lbrace a,c,d\rbrace \subseteq A$ such that $a \notin \lbrace t,z\rbrace$. Assume first that $[\lbrace w\rbrace ,\lbrace u,v\rbrace]$ is full. Observe that the set $\lbrace a\rbrace$ cannot form a $prc$-coalition with any singleton subset of $A$ or $B$. Further, since $w$ has more than one neighbors in $\lbrace a\rbrace \cup \lbrace u,v\rbrace$, it follows that the sets $\lbrace a\rbrace$ and $\lbrace u,v\rbrace$ are not $prc$-partners. Note also that $\lbrace a\rbrace$ does not form a $prc$-partition with any member of $\lbrace \lbrace w\rbrace, \lbrace t,z\rbrace \rbrace$, for otherwise, it is easily seen that a cycle or cycles would be created. Thus, $\lbrace a\rbrace$ has no $prc$-partner, a contradiction. Hence, we may assume that $[\lbrace w\rbrace ,\lbrace u,v\rbrace]$ is not full. It is easy to verify that $\lbrace a\rbrace$ cannot form a $prc$-coalition with  any member of $\lbrace \lbrace w\rbrace ,\lbrace u,v\rbrace\rbrace$, or any singleton subset of  $A \cup  (B \setminus \lbrace t,z\rbrace)$.  Hence, $\lbrace a\rbrace$ must form a $prc$-coalition with $\lbrace t,z\rbrace$. Note that if $\lbrace t,z\rbrace \cap A \neq \emptyset$, then $\lbrace a\rbrace$ cannot form a $prc$-coalition with $\lbrace t,z\rbrace$, so $\lbrace t,z\rbrace \cap A =\emptyset$. By symmetry, each vertex in $\lbrace c,d\rbrace$ must form a $prc$-coalition with $\lbrace t,z\rbrace$ as well. Thus, each vertex in $\lbrace a,c,d\rbrace$ has a neighbor in $\lbrace t,z\rbrace$, so there exists a vertex in $\lbrace t,z\rbrace$ having more that one neighbors in $\lbrace a,c,d\rbrace$, which creates cycle, a contradiction. \\
	
	\begin{unnumbered}{Claim 7.}
		If $\lvert A\rvert =2$, then $B=\lbrace t,z\rbrace$.
	\end{unnumbered}
	\proof
	First we show that $\lbrace t,z\rbrace \cap A =\emptyset$. Suppose, to the contrary, that $\lbrace t,z\rbrace \cap A  \neq \emptyset$. Let by symmetry that $t \in A$, where $A =\lbrace t,a\rbrace$. Now it is not hard verifying that the set $\lbrace a\rbrace$ has no $prc$-partner, a contradiction. 
	Now we show that $B=\lbrace t,z\rbrace$. Suppose, to the contrary, that $B \neq \lbrace t,z\rbrace$. Let $b \in B \setminus \lbrace t,z\rbrace$ and $A=\lbrace a,c\rbrace$. Assume first that $[\lbrace w\rbrace ,\lbrace u,v\rbrace]$ is full. It is easy to check that the set $\lbrace b\rbrace$ cannot form a $prc$-coalition with any member of $\lbrace \lbrace u,v\rbrace ,\lbrace t,z\rbrace \rbrace$, or  any singleton subset of $A \cup (B \setminus \lbrace t,z\rbrace)$. Further, $\lbrace b\rbrace$ does not form a $prc$-coalition with $\lbrace w\rbrace$, for otherwise, $[\lbrace b\rbrace ,\lbrace t,z\rbrace]$ would be full, and so we would have cycle. Thus, $\lbrace b\rbrace$ has no $prc$-partner, a contradiction. Hence, we may assume that $[\lbrace w\rbrace ,\lbrace u,v\rbrace]$ is not full. It is easy to verify that no singleton subset of $A$  has a $prc$-partner in $\lbrace \lbrace w\rbrace ,\lbrace c\rbrace ,\lbrace u,v\rbrace \rbrace$, implying that each of them must form a $prc$-coalition with $\lbrace t,z\rbrace$. Thus, both $a$ and $b$ have a neighbor in $\lbrace t,z\rbrace$, which contradicts the fact that $T$ is a tree.\\
	
	Now assume that $\lvert A\rvert =2$, and so $B=\lbrace t,z\rbrace$.
	Let $A=\lbrace a,b\rbrace$. Observe that the set $\lbrace a\rbrace$ (or $\lbrace b\rbrace$) cannot form a $prc$-coalition with any member of $\lbrace \lbrace w\rbrace ,\lbrace u,v
	\rbrace \rbrace$, so it must form a $prc$-coalition with $\lbrace t,z\rbrace$. So, in order to avoid cycles, we must have $T[\lbrace a,b,t,z\rbrace] =K_2 \cup K_2$, $T[\lbrace u,v,t,z\rbrace]=K_2 \cup K_2$ and $E(T[\lbrace u,v,w\rbrace])=\emptyset$. Hence, $T \simeq P_7$.
	
	\begin{unnumbered}{Claim 8.}
	If $\lvert A\rvert =1$, then $\rvert B\rvert \leq 3$.
\end{unnumbered}
\proof

	Suppose, to the contrary, that $\lvert B\rvert \geq 4$. Let $\lbrace c,d\rbrace \subseteq B \setminus \lbrace
   t,z\rbrace$. We proceed with the following claims.
  
   	\begin{unnumbered}{Claim 8.1.}
   	$[\lbrace w\rbrace ,\lbrace u,v\rbrace]$ is not empty.			
   \end{unnumbered}
   \proof
   Suppose, to the contrary, that  $[\lbrace w\rbrace ,\lbrace u,v\rbrace] =\emptyset$. If $\lbrace t,z\rbrace \cap A =\emptyset$, then it is easy to check that the set $\lbrace c\rbrace$ has no $prc$-partner, a contradiction. Hence, we may assume $\lbrace t,z\rbrace \cap A  \neq \emptyset$. Let by symmetry that $A=\lbrace t\rbrace$. One can verify that the set $\lbrace c\rbrace$ cannot form a $prc$-coalition with any member of $\lbrace \lbrace u,v\rbrace ,\lbrace w\rbrace \rbrace$ or any singleton subset of $B$, implying that $\lbrace c\rbrace$ must form a $prc$-coalition with $\lbrace t,z\rbrace$. Let $B=\lbrace c,d,e,z\rbrace$ and let by symmetry that $zv \in E(T)$. Now the cycle $T[\lbrace t,e,u,d\rbrace]$ would be created, a contradiction. \\

  	\begin{unnumbered}{Claim 8.2.}
	$[\lbrace w\rbrace ,\lbrace u,v\rbrace]$ is not full.			
\end{unnumbered}
\proof
 Suppose that the converse is true. If  $\lbrace t,z\rbrace \cap A =\emptyset$, then it is not hard verifying that the set $\lbrace c\rbrace$ has no $prc$-partner,  a contradiction. Hence, we may assume, by symmetry, that $A=\lbrace t\rbrace$. Observe that the set $\lbrace c\rbrace$ cannot form a $prc$-coalition with $\lbrace u,v\rbrace$ or any singleton subset of $B$. Note also that $\lbrace c\rbrace$ has no $prc$-partner in $\lbrace \lbrace w\rbrace ,\lbrace t,z\rbrace \rbrace$, for otherwise, a cycle or cycles would be created. Hence, $\lbrace c\rbrace$ has no $prc$-partner, a contradiction. \\
 
       Hence, we may assume $\lvert [\lbrace w\rbrace ,\lbrace u,v\rbrace ]\rvert =1$. Let by symmetry that $vw \in E(T)$. Assume first that $\lbrace t,z\rbrace \cap A =\emptyset$. Let $A=\lbrace a\rbrace$. It is easily seen that the set $\lbrace a\rbrace$ cannot for a $prc$-partition with any member of $\lbrace \lbrace w\rbrace ,\lbrace u,v\rbrace \rbrace$ or any singleton subset of $B$. Thus, it must form a $prc$-coalition with $\lbrace t,z\rbrace$, implying that $T[\lbrace u,v,t,z\rbrace] = K_2 \cup K_2$. Observe that the set $\lbrace c\rbrace$ cannot form a $prc$-coalition with any member of $\lbrace A , \lbrace t,z\rbrace \rbrace$ or any singleton subset of $B$. Note also that the sets $\lbrace c\rbrace$ and $\lbrace w\rbrace$ are not $prc$-partners, for otherwise, $[\lbrace c\rbrace ,\lbrace t,z,u\rbrace]$ would be full, and so we would have a cycle, a contradiction. Thus, $\lbrace c\rbrace$ must form a $prc$-coalition with $\lbrace u,v\rbrace$. By symmetry, $\lbrace d\rbrace$ must form a $prc$-coalition with $\lbrace u,v\rbrace$ as well. Therefore,  $[\lbrace a\rbrace ,\lbrace c,d\rbrace]$ is full, creating a cycle or cycles, a contradiction. Hence we may assume, by symmetry, that $A=\lbrace t\rbrace$. Note that the set $\lbrace t,z\rbrace$ cannot form a $prc$-coalition with any singleton subset of $B$, for otherwise, a cycle or cycles would be created. Further, observe that $\lbrace t,z\rbrace$ does not form a $prc$-coalition with $\lbrace w\rbrace$. Hence, $\lbrace t,z\rbrace$ has no $prc$-partner, a contradiction. This completes the proof of Claim 8. To complete the proof of  theorem, we consider the following claims. 
 	\begin{unnumbered}{Claim 9.}
	If $\lvert A\rvert =1$ and $\lvert B\rvert =2$, then $T \in \lbrace P_6, \mathcal{R} \rbrace$.			
\end{unnumbered}
\proof

	First assume  $A \cap \lbrace t,z\rbrace =\emptyset$. Let $A=\lbrace a\rbrace$. It is easy to verify that $\lbrace a\rbrace$ cannot form a $prc$-coalition with any member of $\lbrace \lbrace w\rbrace ,\lbrace u,v\rbrace\rbrace$, so it must form a $prc$-coalition with $\lbrace t,z\rbrace$, implying that each vertex in $\lbrace t,z\rbrace$ has a distinct neighbor in $\lbrace u,v\rbrace$. If $[\lbrace w\rbrace ,\lbrace u,v\rbrace]$ is full, then  $T \simeq \mathcal{R}$. Otherwise, suppose first that  $[\lbrace w\rbrace ,\lbrace u,v\rbrace]$ is empty.  Since $T$ is connected, $a$ has a neighbor in $\lbrace t,z\rbrace$. If $[\lbrace a\rbrace,\lbrace t,z\rbrace]$ is full, then $T \simeq \mathcal{R}$. Otherwise, either $uv \in E(T)$ or $tz \in E(T)$, implying that either $T \simeq \mathcal{R}$ or $T \simeq P_6$.  Now suppose, by symmetry, that $[\lbrace w\rbrace ,\lbrace u,v\rbrace]=\lbrace vw\rbrace$ and $zv\in E(T)$. If $a$ has no neighbor in $\lbrace t,z\rbrace$, then either $uv \in E(T)$ or $tz \in E(T)$, implying that either $T \simeq \mathcal{R}$ or $T \simeq P_6$. Otherwise, we have $at \in E(T)$, and so $T \simeq P_6$.
	Next assume $A \cap \lbrace t,z\rbrace  \neq \emptyset$. Let by symmetry that $A=\lbrace t\rbrace$ and $B=\lbrace b,z\rbrace$. Suppose first that $[\lbrace w\rbrace ,\lbrace u,v\rbrace] =\emptyset$. Then the set $\lbrace b\rbrace$ must form a $prc$-coalition with $\lbrace t,z\rbrace$, implying that each vertex in $\lbrace b,z\rbrace$ has a distinct neighbor in $\lbrace u,v\rbrace$. Since $T$ is connected, $t$ has a neighbor in $\lbrace b,z\rbrace$. Let by symmetry that $tz \in E(T)$. Now for $T$ to be connected, we must have either $bz \in E(T)$ or $uv \in E(T)$, implying respectively that either $T =\mathcal{R}$ or $T=P_6$. Next observe  that if  $[\lbrace w\rbrace ,\lbrace u,v\rbrace]$ is full, then $T=\mathcal{R}$. Finally, suppose that $\lvert [\lbrace w\rbrace ,\lbrace u,v\rbrace]\rvert =1$. Let by symmetry that $wv \in E(T)$. If the set $\lbrace t,z\rbrace$ forms a $prc$-coalition with $\lbrace w \rbrace$, then $zu \in E(T)$. In this case, either $tz \in E(T)$ or $bz \in E(T)$, implying that $T=P_6$. Otherwise, $\lbrace t,z\rbrace$ must form a $prc$-coalition with $\lbrace b\rbrace$. In this case, it is easy to verify that either $T =\mathcal{R}$ or $T=P_6$. \\
	
 	\begin{unnumbered}{Claim 10.}
	If $\lvert A\rvert =1$ and $\lvert B\rvert =3$, then $T=  P_7$.			
\end{unnumbered}
\proof

	First  assume $A \cap \lbrace t,z\rbrace =\emptyset$. Let $A=\lbrace a\rbrace$ and $B=\lbrace t,z, b\rbrace$. If $[\lbrace w\rbrace ,\lbrace u,v\rbrace]=\emptyset$, then it is easy to check that the set $\lbrace b\rbrace$ has no $prc$-partner, a contradiction. Else if  $[\lbrace w\rbrace ,\lbrace u,v\rbrace]$ is full, then considering the fact that $T$ has no cycles, we observe that the set $\lbrace a\rbrace$ has no $prc$-partner, a contradiction. Hence, we may assume, by symmetry, that $[\lbrace w\rbrace ,\lbrace u,v\rbrace]=\lbrace vw\rbrace$. Note that the set $\lbrace a\rbrace$ can only form a $prc$-coalition with $\lbrace t,z\rbrace$, implying that each vertex in $\lbrace t,z\rbrace$ has a distinct neighbor in $\lbrace u,v\rbrace$ and that $b$ is dominated by $\lbrace a\rbrace \cup \lbrace t,z\rbrace$. If $ab \notin E(T)$, then it is not hard verifying that the set $\lbrace b\rbrace$ would have no $prc$-partner, so $ab \in E(T)$, implying that $ub \in E(T)$, and so $T \simeq P_7$.  Now assume $A \cap \lbrace t,z\rbrace  \neq \emptyset$. Let by symmetry that $A=\lbrace t\rbrace$, $B=\lbrace z,b,c\rbrace$ and $vz \in E(T)$. We show that $[\lbrace w\rbrace ,\lbrace u,v\rbrace] \neq \emptyset$. Suppose the opposite is right. Observe that each of the sets $\lbrace b\rbrace$ and $\lbrace c\rbrace$ cannot  form a $prc$-coalition with any member of $\lbrace \lbrace w\rbrace ,\lbrace
	 u,v\rbrace \rbrace$, implying that they must form a $prc$-coalition with $\lbrace t,z\rbrace$, which is again impossible since a cycle would be created. Note that $[\lbrace w\rbrace ,\lbrace u,v\rbrace]$ is not full, for otherwise, it is easy to check that the set $\lbrace t,z\rbrace$ would have no $prc$-partner. Hence, we may assume, by symmetry $[\lbrace w\rbrace ,\lbrace u,v\rbrace]= \lbrace vw\rbrace$. Note that no member of $\lbrace \lbrace t,z\rbrace ,\lbrace b\rbrace ,\lbrace c\rbrace \rbrace$ can form a $prc$-coalition with $\lbrace w\rbrace$, for otherwise, we would have more that six edges, contradicting the fact that $T$ is a tree. Thus, each of the sets $\lbrace b\rbrace$ and $\lbrace c\rbrace$ must form a $prc$-coalition with $\lbrace t,z\rbrace$, which is again impossible since we would have more than six edges. This completes the proof.
\end{proof}

\section{Discussion and conclusions}
In this paper, we introduced the notion of perfect coalitions in graph, which extends the notion of coalitions in graphs. We obtained an upper bound on the number of coalitions involving any member in a $prc$-partition of a graph. This helped us determine the perfect coalition number of paths and cycles. We also obtained some results regarding graphs with large perfect coalition number. We end the paper with the following open problems.
\begin{p}
	Characterize graphs admitting a $prc$-partition.
\end{p}
\begin{p}
	Characterize graphs $G$ in which the equality $PRC(G)=C(G)$ holds.
\end{p}
\begin{p}
	Characterize graphs $G$ with $\delta (G)=2$ and $PRC(G)=\lvert V(G)\rvert$. 
\end{p}
	
\begin{flushleft}
	\textbf{{\large Conflicts of interest}}\vspace{-3.5mm}
\end{flushleft}
The authors declare that they have no conflict of interest.

\section*{Acknowledgments}
A part of this research was conducted by the first author while he was a visiting professor in the Department of Combinatorics and Optimization at the University of Waterloo in Summer 2024.


\begin{thebibliography}{99}

	\bibitem{ref1}  S. Alikhani, D. Bakhshesh, H. Golmohammadi, Total coalitions in graphs,
Quaest.Math. (2024): 1–12. https://doi.org/10.2989/16073606.2024.2365365
\bibitem{ref2} S. Alikhani, D. Bakhshesh, H.R. Golmohammadi and E.V. Konstantinova, Connected coalition in graphs, Discuss. Math. Graph Theory, inpress. https://doi.org/10.7151/dmgt.2509
\bibitem{ref3} D. Bakhshesh, M.A. Henning and D. Pradhan, On the coalition number of trees,
Bull. Malays. Math. Sci. Soc. 46 (2023) 95.


	\bibitem{ref7} T.W. Haynes, J.T. Hedetniemi, S.T. Hedetniemi, A.A. McRae and R. Mohan, In-
troduction to coalitions in graphs, AKCE Int. J. Graphs Comb., 17 (2020) 653–659,
https://doi.org/10.1080/09728600.2020.1832874.

	\bibitem{ref6} T.W. Haynes, J.T. Hedetniemi, S.T. Hedetniemi, A.A. McRae, and R. Mohan,
Coalition graphs of paths, cycles and trees, Discuss. Math. Graph Theory, (2020),
DOI:https://doi.org/10.7151/dmgt.2416.

	\bibitem{ref9} T.W. Haynes, J.T. Hedetniemi, S.T. Hedetniemi, A.A. McRae and R. Mohan, Upper bounds on
the coalition number, Australas. J. Comb., 80(3)  (2021) 442-453.

	\bibitem{ref5} T.W. Haynes, J.T. Hedetniemi, S.T. Hedetniemi, A.A. McRae and R. Mohan, Coalition graphs, Commun. Comb. Optim., 8(2) (2023) 423-430,
	DOI: 10.22049/CCO.2022.27916.1394.
	
	
	\bibitem{ref8}   T.W. Haynes, J.T. Hedetniemi, S.T. Hedetniemi,
	A.A. McRae, and R. Mohan, Self-coalition graphs, Opuscula Math., 43(2) (2023) 173-183,
	https://doi.org/10.7494/OpMath.2023.43.2.173.
	

	\bibitem{ref10}T.W. Haynes, S.T. Hedetniemi, M.A.Henning (eds.): Topics in Domination
in Graphs. Developments in Mathematics, vol. 64. Springer, Cham (2020).
https://doi.org/10.1007/978-3-030-51117-3.
	
 \bibitem{ref4}T. W. Haynes, S. T. Hedetniemi, and M. A. Henning, Domination in Graphs: Core Concepts
Series: Springer Monographs in Mathematics, Springer, Cham, 2023. xx + 644 pp.

	\bibitem{ref11} T.W. Haynes, S.T. Hedetniemi and P.J. Slater, Fundamentals of Domination in Graphs,  Marcel Dekker, New York, (1998).


	\bibitem{ref12} M. R. Samadzadeh and D. A. Mojdeh, Independent coalition in graphs: existence and characterization,
Ars Math. Contemp., 24 (2024)  \# P3.09, doi:10.26493/1855-3974.3113.6f7.

\bibitem{ref13}	M. R. Samadzadeh and D. A. Mojdeh,  Paired coalition in graphs, to appear in AKCE, Int. J. Graphs Comb., (2024)
https://doi.org/10.1080/09728600.2024.2389070.
	


\end{thebibliography}
\end{document}